\pdfoutput=1
\documentclass[12pt,a4paper]{article}
\usepackage[margin=2.5cm]{geometry}
\usepackage{amsmath,amssymb,amsthm}
\usepackage{booktabs}
\usepackage{graphicx}
\usepackage{xcolor}
\usepackage{tikz}
\usetikzlibrary{decorations.pathreplacing}
\usepackage[hidelinks]{hyperref}
\newtheorem{theorem}{Theorem}
\newtheorem{proposition}[theorem]{Proposition}
\newtheorem{lemma}[theorem]{Lemma}
\newtheorem{corollary}[theorem]{Corollary}
\newcommand{\PP}{\mathbb{P}}
\newcommand{\EE}{\mathbb{E}}
\newcommand{\ind}{\mathbf 1}
\newcommand{\Cor}{C_c^{\rightarrow}}
\newcommand{\thor}{\theta^{\rightarrow}}
\newcommand{\step}[2]{\medskip\textbf{Step #1 (#2).}}

\begin{document}

\begin{center}
{\LARGE Explicit upper bounds on the threshold of\\[4pt] one-dimensional long-range percolation\par}
\vspace{1.1em}
{\large Guy Amit\par}
\vspace{0.5em}
Department of Natural Sciences, The Open University of Israel, Ra'anana, Israel\par
\vspace{0.3em}
E-mail: \texttt{guyam@openu.ac.il}
\end{center}

\vspace{0.6em}
\noindent\textbf{Abstract.}
We study bond percolation on the one-dimensional lattice in which two sites at distance $r$ are connected with
probability $p_r=C/r^{1+\sigma}$, where $0<\sigma<1$. An infinite cluster exists if $C$ exceeds a critical value
$C_c(\sigma)$, which is not known exactly: the available results are the lower bound of Schulman,
$C_c\ge1/[2\zeta(1+\sigma)]$, and numerical estimates. Here we prove explicit upper bounds. We apply a second-moment
argument to a family of random monotone paths whose steps have a heavy-tailed (Sibuya) distribution. Two such paths
meet only a finite number of times, and their overlap is computed exactly by renewal theory. The result is a
closed-form bound, which gives $C_c\le\sigma+O(\sigma^2)$ as $\sigma\to0$ and is below $1$ for $\sigma<0.667$.
Since the paths are monotone, the bound holds also for oriented percolation, and it determines the oriented
threshold asymptotically: $\Cor\,\zeta(1+\sigma)\to1$ as $\sigma\to0$. A coarse-grained version of the argument,
with blocks of sites in place of sites, gives an explicit bound $C_c<1$ for every $\sigma<1$. We compare the bounds
with the known numerical values of $C_c$.

\vspace{0.4em}
\noindent\textbf{Keywords:} long-range percolation, percolation threshold, oriented percolation, second-moment
method, renewal processes

\section{Introduction}
\label{sec:intro}

In this work, we ask the following question: In a one-dimensional lattice in which every pair of sites can be
connected, with a probability that decays as a power of the distance, how strong must the connections be in order to
guarantee an infinite connected cluster?

Long-range (LR) percolation in one dimension was introduced by Zhang, Pu and Li~\cite{zhang1983} and
by Schulman~\cite{schulman1983}. Every pair of sites at distance $r$ is connected by a bond with probability
\begin{equation}\label{eq:pr}
p_r=\frac{C}{r^{1+\sigma}},
\end{equation}
independently of all other pairs. With bonds of a finite range, a one-dimensional chain has no transition at a
nontrivial value of the parameter: an infinite cluster exists only if nearest neighbours are connected with
probability one, in the same way that the Ising chain with short-range interactions is ordered only at zero
temperature. The LR bonds change this, as they do in the Ising chain with LR
interactions~\cite{dyson1969}. For $\sigma>1$ there is still no infinite cluster unless nearest neighbours are
connected with probability one~\cite{schulman1983}. For $0<\sigma<1$ there is a transition at a critical value
$0<C_c(\sigma)<1$, as proved by Newman and Schulman~\cite{newman1986} with a multi-scale renormalization argument.
The case $\sigma=1$ is marginal. In the model \eqref{eq:pr} with $\sigma=1$ there is no infinite cluster for any
$C<1$~\cite{aizenman1986}. A transition at $\sigma=1$ occurs only in a wider family of models, in which the long
bonds have the probability $p_r\approx\beta/r^2$ with an amplitude $\beta>1$, while the probability of the
nearest-neighbour bonds is a separate parameter; this transition is
discontinuous~\cite{aizenman1986,duminil2024}. More recently, the critical behaviour for $0<\sigma<1$ was studied
rigorously. There is no infinite cluster at the critical point~\cite{berger2002}, the cluster-size distribution at
criticality obeys power-law bounds~\cite{hutchcroft2021,baumler2024iso}, and mean-field behaviour was proved for
$\sigma<1/3$~\cite{hutchcroft2025I}.

Despite the simplicity of the model, the value of $C_c(\sigma)$ is not known. Schulman~\cite{schulman1983} proved,
by a comparison with a Bethe lattice, the lower bound
\begin{equation}\label{eq:schulman}
C_c(\sigma)\ge C_S(\sigma)=\frac{1}{2\zeta(1+\sigma)},
\end{equation}
where $\zeta$ is the Riemann zeta function; see also~\cite{spouge1984}. On the numerical side, Rego
\textit{et al}~\cite{rego1999} found that the threshold vanishes linearly as $\sigma\to0$,
Grassberger~\cite{grassberger2013} studied the equivalent epidemic process, and Gori \textit{et al}~\cite{gori2017}
determined $C_c(\sigma)$ with high precision and observed that it approaches $C_S(\sigma)$ as $\sigma\to0$. The
proof of~\cite{newman1986} that $C_c<1$ does not give a numerical value. To the best of our knowledge, no explicit
upper bound on $C_c(\sigma)$ has been published.

The value of $C_c$ is also of practical interest. LR percolation describes spreading processes with
immunization in which the infection is transmitted over distances with a power-law distribution, and $C_c$ is
then the epidemic threshold~\cite{grassberger2013,janssen1999,linder2008}. It also marks the onset of the regime in which a
network with LR links has an infinite cluster with very short paths, whose length grows only as a power of the
logarithm of the distance~\cite{biskup2004}. Finally, it is the reference value for more complex LR networks, such
as fractal and interdependent networks~\cite{amit2023,sharabi2026}.

Here we prove explicit upper bounds on $C_c(\sigma)$. Our results are the following.
\begin{itemize}
\item A lower bound on the percolation probability, for every choice of a step distribution of a random monotone
path (theorem~\ref{thm:main}).
\item An explicit choice of the step distribution gives an upper bound $C^U(\sigma)$ on $C_c$, with a closed form
that behaves as $\sigma+O(\sigma^2)$ for small $\sigma$ (corollary~\ref{cor:bound}). Together with
\eqref{eq:schulman}, this shows rigorously that $C_c$ vanishes linearly with $\sigma$: in the limit $\sigma\to0$
the ratio $C_c/\sigma$ lies between $1/2$ and $1$.
\item The same bound holds for \emph{oriented} percolation, in which bonds are used in one direction only. For this
model the bound is asymptotically sharp: the oriented threshold satisfies $\Cor\,\zeta(1+\sigma)\to1$ as
$\sigma\to0$, and the upper and lower bounds differ by $0.5\%$ at $\sigma=0.1$.
\item A coarse-grained version, with blocks of $L$ sites, gives an explicit bound $C_c<1$ for all $\sigma<1$
(theorem~\ref{thm:block} and corollary~\ref{cor:less1}).
\end{itemize}

As an explicit example, for $0<\sigma<\frac12$ an infinite cluster exists whenever
\[
C>\sigma^2\,\zeta(1+\sigma)\,\frac{\Gamma(1-2\sigma)}{\Gamma(1-\sigma)^2}.
\]
For $\sigma=0.2$ the right-hand side is $0.246$, and the optimized bound of corollary~\ref{cor:bound} is $0.185$.
For comparison, at this $\sigma$ there is no infinite cluster for $C<0.089$ by \eqref{eq:schulman}, and the
numerical threshold is $0.093$~\cite{gori2017}.

Our method is the second-moment method~\cite{alon2016}, applied to a weighted count of open
paths~\cite{lyonsperes2016}. One chooses a random path, independently of the bonds, and shows that the second moment
of the number of open paths is not much larger than the square of its mean. This implies that open paths of any
length exist with positive probability. The method was used by Cox and Durrett~\cite{cox1983} to bound the
threshold of oriented nearest-neighbour percolation on the hypercubic lattice $\mathbb{Z}^d$ with $d\ge4$, and by
Lyons~\cite{lyons1990} to find the threshold of percolation on general trees. Benjamini, Pemantle and
Peres~\cite{benjamini1998} applied it to oriented nearest-neighbour percolation on $\mathbb{Z}^3$, and
B\"aumler~\cite{baumler2024trunc} recently applied it to percolation on $\mathbb{Z}^d$, $d\ge3$, with bonds of
unbounded length. In all these works the key property is that two independent random paths rarely intersect.

In one dimension this property fails for paths with short steps. Two independent monotone paths whose steps have a
finite mean length meet infinitely often, because each of them visits a positive fraction of the sites. With LR
bonds, however, a path can take steps with a heavy-tailed length distribution, in which the probability of a step
of length $r$ decays as $r^{-1-\alpha}$, with $0<\alpha<1$. Two independent paths of this kind have only a finite
number of common sites if $\alpha<1/2$. This is a standard fact of renewal theory, and for the steps that we use
it follows from explicit formulas (lemma~\ref{lem:sibuya}, and step~7 of section~\ref{sec:proof}). The same fact
is behind the irrelevance of disorder in pinning models~\cite{alexander2008,toninelli2008}.

There is a competing requirement. A bond of length $r$ is open with the small probability
$p_r\propto r^{-1-\sigma}$. Because of this, every bond that two independent paths share multiplies the second
moment by the large factor $1/p_r$, and the mean of this factor is finite only if the steps are not too long,
namely if $\alpha>\sigma/2$ (section~\ref{sec:results}). The two requirements, $\sigma/2<\alpha<1/2$, can be
satisfied together if and only if $\sigma<1$, which is exactly the range in which the model has a transition.

The bounds are explicit for the following reason. At every site that two independent paths have in common, both
paths start afresh, independently of their past. The second moment is therefore a geometric series, which we sum
exactly. The result, theorem~\ref{thm:main}, contains only the step distribution and the mean number of sites that
two paths have in common, and for a suitable step distribution both are known in closed form.

The paper is organized as follows. In section~\ref{sec:results} we define the model and state the results. Theorem~\ref{thm:main}
is proved in section~\ref{sec:proof}, the explicit bound in section~\ref{sec:sibuya}, and the bound with blocks in
section~\ref{sec:blocks}. The numerical evaluation of the bounds is described in section~\ref{sec:numerics}, and
the results are discussed in section~\ref{sec:discussion}.

\section{Model and results}
\label{sec:results}

In this section we define the model and state all the results. The proofs are given in
sections~\ref{sec:proof}--\ref{sec:blocks}.

\subsection{The model}

The sites are the integers $\mathbb{Z}$. Every pair of sites at distance $r\ge1$ is connected by a bond with the
probability $p_r$ of \eqref{eq:pr}, independently of all other pairs, where $0<\sigma<1$ and $0<C\le1$. A bond that
is present is called \emph{open}. A \emph{path} is a finite or infinite sequence of sites $x_0,x_1,x_2,\dots$, and
it is open if all the bonds $\{x_{k-1},x_k\}$ between its consecutive sites are open. The cluster of a site is the
set of sites that can be reached from it by open paths. We denote
\[
\theta(C)=\PP(\text{the cluster of site }0\text{ is infinite}),\qquad C_c=\inf\{C:\theta(C)>0\}.
\]

We also consider the \emph{oriented} version of the model. The bonds and their probabilities are the same, but only
monotone paths are allowed. A \emph{monotone path} is a path with $x_0<x_1<x_2<\cdots$, so that every bond is
traversed from its left end to its right end. The \emph{forward cluster} of site $0$ is the set of sites that can
be reached from $0$ by open monotone paths. We denote
\[
\thor(C)=\PP(\text{the forward cluster of site }0\text{ is infinite}),\qquad \Cor=\inf\{C:\thor(C)>0\}.
\]
Every monotone path is a path, so the forward cluster is contained in the cluster. Hence $\thor\le\theta$ and
\begin{equation}\label{eq:order}
C_c\le\Cor\le1 .
\end{equation}
The last inequality holds because at $C=1$ all nearest neighbours are connected. Oriented percolation in this model
was considered in~\cite{newman1986}, and for $\sigma=1$ in~\cite{marchetti2010}.

The lower bound \eqref{eq:schulman} has an oriented analog. Both follow from a count of open paths, which we recall
for completeness.

\begin{proposition}[first-moment bounds]\label{prop:lower}
For $0<\sigma<1$,
\[
C_c(\sigma)\ge\frac{1}{2\zeta(1+\sigma)},\qquad \Cor(\sigma)\ge\frac{1}{\zeta(1+\sigma)} .
\]
\end{proposition}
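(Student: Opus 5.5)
We use the first-moment method on the number of open self-avoiding paths from the origin. If the cluster of $0$ is infinite, then for every $n$ there is an open self-avoiding path $0=x_0,x_1,\dots,x_n$ with distinct sites. Indeed, an infinite connected locally finite graph contains an infinite self-avoiding path. The graph here is almost surely locally finite, because $\sum_r p_r<\infty$ and Borel--Cantelli applies at each site. Let $N_n$ be the number of open self-avoiding paths of $n$ steps starting at $0$. Then
\[
\theta(C)\le \PP(N_n\ge1)\le \EE N_n .
\]
It therefore suffices to show that $\EE N_n\to0$ for $C$ below the stated values; this gives $\theta(C)=0$ there, hence the bound on $C_c$.

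\textbf{Unoriented bound.} Along a self-avoiding path the bonds $\{x_{k-1},x_k\}$ are distinct, so they are independent and the path is open with probability $\prod_k p_{|x_k-x_{k-1}|}$. Drop the self-avoidance constraint to get an upper bound: sum over all sequences of nonzero increments $d_k\in\mathbb{Z}\setminus\{0\}$. This gives
\[
\EE N_n\le \Bigl(\sum_{d\ne0}\frac{C}{|d|^{1+\sigma}}\Bigr)^n=\bigl(2C\zeta(1+\sigma)\bigr)^n .
\]
The right-hand side tends to $0$ when $C<1/[2\zeta(1+\sigma)]$. Hence $C_c\ge 1/[2\zeta(1+\sigma)]$.

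\textbf{Oriented bound.} If the forward cluster is infinite, then it contains infinitely many sites, each reached from $0$ by an open monotone path. An infinite increasing set is unbounded. We cannot simply take arbitrarily long paths, since a far site may be reached in a single step. Instead we use König's lemma on the tree of open monotone paths from $0$. Each vertex of this tree (a path) has finitely many children almost surely, because only finitely many bonds to the right of each site are open. An infinite locally finite tree has an infinite branch, so for every $n$ there is an open monotone path of $n$ steps. Its steps are positive and its bonds are distinct, so
\[
\thor(C)\le\EE N_n^{\rightarrow}= \Bigl(\sum_{d\ge1}\frac{C}{d^{1+\sigma}}\Bigr)^n=\bigl(C\zeta(1+\sigma)\bigr)^n .
\]
This tends to $0$ for $C<1/\zeta(1+\sigma)$, which gives $\Cor\ge1/\zeta(1+\sigma)$.

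The only delicate step is the reduction from an infinite cluster to the existence of open paths of every length. Both cases rest on local finiteness, via Borel--Cantelli, together with König's lemma. The counting itself is routine.
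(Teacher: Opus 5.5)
Your proof is correct and follows the paper's: the same first-moment count of open monotone (respectively self-avoiding) paths gives $[C\zeta(1+\sigma)]^n$ (respectively at most $[2C\zeta(1+\sigma)]^n$), and local finiteness is used to pass from an infinite cluster to open paths of every length $n$. The only cosmetic difference is that you invoke K\"onig's lemma for that reduction, whereas the paper argues directly that the sets $A_k$ of sites reachable in exactly $k$ steps are all finite, so an infinite cluster forces an open path of at least $n$ steps, whose first $n$ steps are then taken.
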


\begin{proof}
Let $N_n$ be the number of open monotone paths of $n$ steps from $0$. Such a path is determined by its step lengths
$r_1,\dots,r_n\ge1$. It uses $n$ different bonds, which are open independently of each other, so it is open with
probability $p_{r_1}\cdots p_{r_n}$. The mean of $N_n$ is the sum of these probabilities over all the paths, and
the sum of the products factorizes:
\[
\EE N_n=\sum_{r_1,\dots,r_n\ge1}p_{r_1}\cdots p_{r_n}=\Bigl(\sum_{r\ge1}p_r\Bigr)^n=[C\zeta(1+\sigma)]^n .
\]
Next we show that if the forward cluster of $0$ is infinite, then $N_n\ge1$ for every $n$. The expected number of
open bonds that have a given site as one of their two ends is $2\sum_rp_r<\infty$, so with probability one every
site has a finite number of such bonds. Let $A_k$ be the set of sites that can be reached from $0$ by an open
monotone path of exactly $k$ steps. Then $A_0=\{0\}$, and every site of $A_{k+1}$ is connected by an open bond to a
site of $A_k$. Hence, if $A_k$ is finite then $A_{k+1}$ is finite, and so all the sets $A_k$ are finite. The
forward cluster is the union of all the $A_k$. If it is infinite, it is not contained in the finite set
$A_0\cup\dots\cup A_{n-1}$, so there is an open monotone path of some $k\ge n$ steps, and its first $n$ steps are an
open monotone path of $n$ steps.

Finally, $\PP(N_n\ge1)\le\EE N_n$ by Markov's inequality. Together,
\[
\thor(C)\;\le\;\PP(N_n\ge1)\;\le\;\EE N_n=[C\zeta(1+\sigma)]^n\qquad\text{for every }n .
\]
If $C\zeta(1+\sigma)<1$ the right-hand side tends to zero as $n\to\infty$, and therefore $\thor(C)=0$.

For the unoriented model we count self-avoiding paths, whose steps can be to the right or to the left. A step of
length $r$, in either direction, is open with probability $p_r$, and dropping the restriction that the path is
self-avoiding can only increase the sum. The expected number of open self-avoiding paths of $n$ steps from $0$ is
therefore at most $\bigl(2\sum_{r\ge1}p_r\bigr)^n=[2C\zeta(1+\sigma)]^n$, and the same argument gives $\theta(C)=0$
if $2C\zeta(1+\sigma)<1$.
\end{proof}

Both statements of proposition~\ref{prop:lower} are lower bounds: they tell us when there is no percolation. In the
opposite direction, \eqref{eq:order} shows that every upper bound on $\Cor$ is also an upper bound on $C_c$. This is
how we obtain our upper bounds: theorem~\ref{thm:main} below proves oriented percolation.

\subsection{The main result}

Let $\pi$ be a probability distribution on the step lengths $\{1,2,3,\dots\}$. A \emph{random monotone path} is an
infinite sequence of sites
\[
0=X_0<X_1<X_2<\cdots
\]
whose steps $X_k-X_{k-1}$ are independent, each with distribution $\pi$. The path is drawn independently of the
bonds, and it is not required to be open. The distribution $\pi$ is not a property of the model. It is a tool of
the proof, which we are free to choose, and it should not be confused with the bond probabilities $p_r$, which do
not sum to one.

The bound of theorem~\ref{thm:main} depends on $\pi$ through two quantities. The first is the probability that the
path visits a given site $x\ge0$,
\[
u(x)=\PP\bigl(x\in\{X_0,X_1,X_2,\dots\}\bigr),\qquad u(0)=1,
\]
which is known as the renewal sequence of $\pi$~\cite{feller1968}. The second is the sum
\begin{equation}\label{eq:W}
W(\pi)=\sum_{x\ge0}u(x)^2 .
\end{equation}
For two independent paths, $u(x)^2$ is the probability that both visit $x$. Hence $W(\pi)$ is the mean number of
sites that two independent infinite paths have in common. Site $0$ is one of them, so $W\ge1$.

\begin{theorem}\label{thm:main}
For every $\pi$ with $W(\pi)<\infty$, and every $0<C\le1$,
\begin{equation}\label{eq:main}
\theta(C)\;\ge\;\thor(C)\;\ge\;1-W(\pi)\sum_{r\ge1}\pi(r)^2\Bigl(\frac1{p_r}-1\Bigr).
\end{equation}
In particular $C_c\le\Cor\le C$ whenever the right-hand side is positive.
\end{theorem}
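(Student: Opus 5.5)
The plan is the second-moment method applied to a weighted count of open monotone paths. Fix $n\ge1$ and define
\[
Z_n=\sum_{\gamma}\PP(\text{path }=\gamma)\,\frac{\ind\{\gamma\text{ open}\}}{\PP(\gamma\text{ open})},
\]
where the sum runs over monotone paths $\gamma$ of $n$ steps from $0$, weighted by the law of the random path $(X_0,\dots,X_n)$. Equivalently, $Z_n=\EE'\bigl[\ind\{X\text{ open}\}/\prod_k p_{X_k-X_{k-1}}\bigr]$, where $\EE'$ averages over the random path with the bonds fixed. Then $\EE Z_n=1$. By Cauchy--Schwarz (Paley--Zygmund), $\PP(Z_n>0)\ge 1/\EE Z_n^2$. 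The event $\{Z_n>0\}$ implies that there is an open monotone path of $n$ steps from $0$. These events decrease in $n$, and their intersection forces an infinite forward cluster, since monotone paths of length $n$ reach distance at least $n$. Hence $\thor(C)\ge\lim_n 1/\EE Z_n^2$. I will prove the sharper bound $\EE Z_n^2\le 1/(1-\Delta)$ with $\Delta=W\sum_r\pi(r)^2(1/p_r-1)$. This gives $\thor\ge1-\Delta$; when $\Delta\ge1$ the statement is trivial.

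\textbf{Second moment.} Take two independent copies $X,Y$ of the random path. Then
\[
\EE Z_n^2=\EE'\Bigl[\prod_{b\in X\cap Y}p_b^{-1}\Bigr],
\]
where the product is over bonds $b$ used by both paths, and the bonds are monotone steps $\{x,x+r\}$. This holds because $\PP(X,Y\text{ open})=\prod_{b\in X\cup Y}p_b$. Write $p_b^{-1}=1+(p_b^{-1}-1)$ and expand the product over subsets $S$ of shared bonds. Both paths are monotone, so a set of shared bonds is ordered left to right. Bounding the finite-$n$ restriction by the infinite paths (all terms are nonnegative), we get
\[
\EE Z_n^2\le\sum_{k\ge0}\ \sum_{b_1<\dots<b_k}\ \PP'(b_1,\dots,b_k\text{ all used by both})\prod_i (p_{b_i}^{-1}-1).
\]

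\textbf{Renewal factorization.} Let $b_i=\{a_i,a_i+r_i\}$. Using the regeneration property of renewal processes at sites visited by a path, the probability that one path uses all these bonds is
\[
u(a_1)\,\pi(r_1)\,u(a_2-a_1-r_1)\,\pi(r_2)\cdots .
\]
For two independent paths this is squared, giving factors $u(\cdot)^2\pi(r_i)^2$. Summing each gap $a_{i+1}-a_i-r_i\ge0$ (and $a_1\ge0$) freely, over all nonnegative values including overlapping configurations, only adds nonnegative terms. Each gap contributes $W$ and each bond contributes $\sum_r\pi(r)^2(p_r^{-1}-1)$. The $k$-th term is therefore at most $\Delta^k$, and summing the geometric series gives $\EE Z_n^2\le1/(1-\Delta)$ for $\Delta<1$.

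\textbf{Expected obstacle.} The delicate point is the expansion step: correctly converting the product over shared bonds into an ordered sum, and verifying the factorization via the renewal property. The successive shared bonds must be disjoint and left to right, with gaps $\ge0$ measured from the right end of one bond to the left end of the next. Relaxing to a free sum over gaps is only an upper bound, which is the direction needed. Finally, the first inequality in \eqref{eq:main} is \eqref{eq:order}. The "in particular" statement follows: a positive right-hand side gives $\thor(C)>0$, hence $\Cor\le C$, and then $C_c\le\Cor$ by \eqref{eq:order}.
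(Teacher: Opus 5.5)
Your proof is correct. Its first half is the same as the paper's Steps~1--4: the weighted count $Z_n$, $\EE Z_n=1$, the Cauchy--Schwarz bound, the monotonicity in $n$, and the reduction to $V=\EE_{\gamma,\gamma'}\bigl[\prod_{e\in\gamma\cap\gamma'}p_e^{-1}\bigr]$ over infinite paths.

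You differ from the paper in how you evaluate $V$.

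\textbf{The paper's route.} It cuts the pair of paths at \emph{every} common site. By joint regeneration of both paths, the successive segments are independent. Each segment is a shared bond (weight $1/p_r$), a meeting without a shared bond (weight $1$), or there is no further meeting. The paper then identifies the return probability $\rho=1-1/W$ from $\EE N=\sum_k\rho^k=W$, and sums $V=\sum_k(\rho+S)^k(1-\rho)$.

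\textbf{Your route.} You write $1/p_b=1+(1/p_b-1)$ and expand over finite sets of shared bonds. You then need only the single-path factorization $u(a_1)\pi(r_1)\,u(a_2-a_1-r_1)\pi(r_2)\cdots$, squared for two independent paths. Each gap gives a factor $W$ and each selected bond a factor $S=\sum_r\pi(r)^2(1/p_r-1)$.

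\textbf{Your bound is an equality.} Your free sum over gaps $\ge0$ is not just an upper bound. Sets of bonds with gaps $\ge0$ are in bijection with the sets a monotone path can use simultaneously, so there are no overlapping configurations being added. Hence the $k$-th term equals $(WS)^k$ exactly, and you recover the paper's identity $V=1/(1-WS)$. For the same reason, this is the paper's bound, not a sharper one. The two series are related by binomial resummation:
$\sum_{k\ge0}(\rho+S)^k(1-\rho)=\sum_{j\ge0}S^j(1-\rho)^{-j}=\sum_{j\ge0}(WS)^j$.

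\textbf{What each route buys.} Yours is shorter. It needs neither $\rho$ nor the three-case analysis, and it shows $WS$ directly as the expected total excess cost of shared bonds. The paper's route gives more structure: the number of common sites is geometric, and the case probabilities are explicit. The paper reuses this later. For example, it uses $\sum_r\pi(r)^2\le\rho$ to show that the method cannot go below $C=1/\zeta(1+\sigma)$. It also reads the denominator of the bound $C^U$ through the probability of meeting without a shared bond.
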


The right-hand side of \eqref{eq:main} can be understood as follows; the precise argument is given in
section~\ref{sec:proof}. The proof introduces a random variable $Z$, which counts the open paths with suitable
weights, such that $\EE Z=1$ and $\PP(Z>0)\ge1/\EE Z^2$. The second moment $\EE Z^2$ turns out to be an average over
pairs of independent random paths. A pair of paths that have no bond in common contributes $1$ to this average,
and every bond of length $r$ that the two paths share multiplies the contribution of the pair by $1/p_r\ge1$.
Shared bonds therefore increase $\EE Z^2$ and weaken the bound, and we refer to the excess $1/p_r-1$ as the
\emph{cost} of a shared bond. At a site that both paths visit, they take the same step $r$ with probability
$\pi(r)^2$. The sum in \eqref{eq:main} is therefore the mean cost per common site, and $W$ is the mean number of
common sites. The bound is useful when their product is small.

For the bound to be useful, both factors on the right-hand side of \eqref{eq:main} must be finite. Suppose that
the steps have a power-law tail, $\pi(r)\sim r^{-1-\alpha}$, so that a smaller $\alpha$ means longer steps. The
mean cost is finite if and only if the sum $\sum_r\pi(r)^2/p_r\propto\sum_rr^{-2-2\alpha}\,r^{1+\sigma}$ converges,
that is, if and only if $\alpha>\sigma/2$: the steps must not be too long. For such steps
$u(x)\sim x^{\alpha-1}$, so $W=\sum_xu(x)^2$ is finite if and only if $\alpha<1/2$: the steps must be long enough
for two independent paths to have only a finite number of common sites. Both conditions hold if and only if
$\alpha$ lies in the window
\begin{equation}\label{eq:window}
\frac{\sigma}{2}<\alpha<\frac12 ,
\end{equation}
which is not empty if and only if $\sigma<1$. As $\sigma\to1$ the window shrinks to the single point
$\alpha=\frac12$, at which both sums diverge. The upper bounds on $C_c$ that we obtain therefore become weaker as
$\sigma$ grows, and approach the trivial bound $C_c\le1$ as $\sigma\to1$ (figure~\ref{fig:bounds}).

\begin{figure}[t]
\centering
\includegraphics[width=0.8\textwidth]{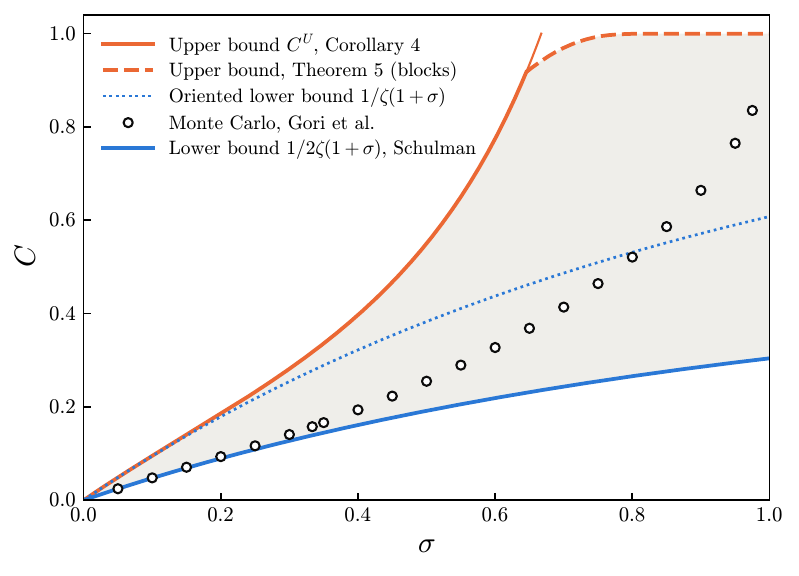}
\caption{Bounds on the thresholds as functions of $\sigma$. Thick blue line: the lower bound \eqref{eq:schulman}
of Schulman~\cite{schulman1983} on $C_c$. Circles: the Monte Carlo values of $C_c$ of Gori
\textit{et al}~\cite{gori2017}; their error bars are smaller than the symbols. Orange lines: the upper bounds of
this work on $C_c$, namely $C^U$ of corollary~\ref{cor:bound} (solid) and the bound of theorem~\ref{thm:block} with
blocks (dashed), which is the better one for $\sigma>0.645$. The dashed line is below $1$ for every $\sigma<1$, but
for $\sigma>0.8$ it is closer to $1$ than the line width. The threshold $C_c$ lies in the shaded region. Dotted
blue line: the lower bound $1/\zeta(1+\sigma)$ on the oriented threshold $\Cor$ (proposition~\ref{prop:lower}).
The upper bound on $\Cor$ is the orange line $C^U$, which is drawn thin beyond $\sigma=0.645$ and reaches $C=1$ at
$\sigma\approx0.667$. Thus $\Cor$ lies between the dotted blue line and the orange line $C^U$.}
\label{fig:bounds}
\end{figure}

\subsection{An explicit bound}

To turn theorem~\ref{thm:main} into an explicit bound on the threshold we need a step distribution with a
power-law tail for which $W$ is known. A convenient choice is the Sibuya distribution~\cite{sibuya1979}
$\pi_\alpha$, with $0<\alpha<1$, defined by its generating function
\begin{equation}\label{eq:gf}
\sum_{r\ge1}\pi_\alpha(r)\,z^r=1-(1-z)^\alpha .
\end{equation}

\begin{lemma}[Sibuya steps]\label{lem:sibuya}
For $0<\alpha<1$:
\begin{itemize}
\item[(S1)] $\displaystyle\pi_\alpha(r)=\frac{\alpha}{r}\prod_{k=1}^{r-1}\Bigl(1-\frac{\alpha}{k}\Bigr)>0$, and
$\sum_{r\ge1}\pi_\alpha(r)=1$.
\item[(S2)] $\pi_\alpha(r)\le\alpha\,r^{-1-\alpha}$.
\item[(S3)] The renewal sequence is $\displaystyle u(x)=\frac{\Gamma(x+\alpha)}{\Gamma(\alpha)\,x!}$.
\item[(S4)] For $0<\alpha<\frac12$,
\begin{equation}\label{eq:Wsib}
W(\alpha):=W(\pi_\alpha)=\frac{\Gamma(1-2\alpha)}{\Gamma(1-\alpha)^2},\qquad
\sum_{r\ge1}\pi_\alpha(r)^2=\frac{\Gamma(1+2\alpha)}{\Gamma(1+\alpha)^2}-1,
\end{equation}
and $W(\alpha)=\infty$ for $\alpha\ge\frac12$.
\end{itemize}
\end{lemma}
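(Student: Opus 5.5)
The plan is to work throughout with the generating function \eqref{eq:gf}, $F(z)=1-(1-z)^\alpha$, and to read off each item from a standard expansion.

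For (S1), expand $(1-z)^\alpha=\sum_r\binom{\alpha}{r}(-z)^r$. For $r\ge1$ this gives
\[
\pi_\alpha(r)=-(-1)^r\binom{\alpha}{r}=\frac{\alpha(1-\alpha)(2-\alpha)\cdots(r-1-\alpha)}{r!},
\]
which is the stated product. Since $0<\alpha<1$, every factor $1-\alpha/k$ is positive, so $\pi_\alpha(r)>0$. Setting $z=1$ gives $F(1)=1$, and since the coefficients are positive, Abel's theorem (or monotone convergence) yields $\sum_r\pi_\alpha(r)=1$.

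For (S2), use $1-\alpha/k\le e^{-\alpha/k}$. This gives $\prod_{k<r}(1-\alpha/k)\le \exp(-\alpha H_{r-1})$, where $H_{r-1}$ is the harmonic number. Then $H_{r-1}\ge\log r$ holds because $\sum_{k=1}^{r-1}1/k\ge\int_1^r dt/t$. So $\pi_\alpha(r)\le (\alpha/r)\,r^{-\alpha}$.

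For (S3), apply the renewal identity $U(z)=\sum_x u(x)z^x=1/(1-F(z))$, which follows from $U=\sum_n F^n$ by independence of the steps. Here $U(z)=(1-z)^{-\alpha}$. Its coefficients are $\binom{x+\alpha-1}{x}=\Gamma(x+\alpha)/(\Gamma(\alpha)x!)$.

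For (S4), both sums are squared-coefficient sums, and I will evaluate them with Gauss's summation formula.
\begin{itemize}
\item For $W$, write $u(x)=(\alpha)_x/x!$. Then $\sum_x u(x)^2={}_2F_1(\alpha,\alpha;1;1)$. Gauss's theorem gives $\Gamma(1)\Gamma(1-2\alpha)/\Gamma(1-\alpha)^2$, and it is valid exactly when $1-2\alpha>0$.
\item For the second sum, write $\pi_\alpha(r)=-(-\alpha)_r/r!$, so that
\[
\sum_{r\ge0}\binom{\alpha}{r}^2={}_2F_1(-\alpha,-\alpha;1;1)=\frac{\Gamma(1+2\alpha)}{\Gamma(1+\alpha)^2}.
\]
Gauss's theorem applies here because $1+2\alpha>0$. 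Subtracting the $r=0$ term, which equals $1$, gives the claim.
\item As a sanity check and alternative route, the second identity is also the Vandermonde-type coefficient identity $\sum_r\binom{\alpha}{r}^2=\binom{2\alpha}{\alpha}$, which extends to non-integer $\alpha$ via Gauss.
\end{itemize}
For divergence when $\alpha\ge1/2$, Stirling gives $u(x)\sim x^{\alpha-1}/\Gamma(\alpha)$. Then $u(x)^2\sim x^{2\alpha-2}$, which is not summable when $2\alpha-2\ge-1$.

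The main obstacle is justifying Gauss's formula at the boundary argument $z=1$. The plan is to invoke the classical statement for ${}_2F_1(a,b;c;1)$ with $\operatorname{Re}(c-a-b)>0$. Alternatively, one can argue directly by Abel's theorem: the terms are nonnegative (for $W$) or absolutely summable by (S2) (for the $\pi$ sum), so the hypergeometric value at $z\uparrow1$ equals the sum.
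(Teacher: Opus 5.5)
Your proposal is correct and follows the paper's proof item by item. Both use the binomial coefficients for (S1), the bound $1-x\le e^{-x}$ with the harmonic-sum/logarithm comparison for (S2), $U=1/(1-\hat\pi)$ with the binomial series of $(1-z)^{-\alpha}$ for (S3), and Gauss's summation with $a=b=\alpha$ and $a=b=-\alpha$, $c=1$, plus the $x^{2\alpha-2}$ asymptotics, for (S4). The differences are cosmetic: you obtain $U=\sum_n F^n$ directly rather than from the last-step renewal equation, and you add monotone-convergence and Abel justifications at $z=1$, which the paper leaves implicit or covers by citing Gauss's theorem.
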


\begin{corollary}\label{cor:bound}
For $0<\sigma<1$,
\begin{equation}\label{eq:CU}
C_c(\sigma)\;\le\;\Cor(\sigma)\;\le\;C^U(\sigma):=\min_{\sigma/2<\alpha<1/2}\;
\frac{\displaystyle\sum_{r\ge1}\pi_\alpha(r)^2\,r^{1+\sigma}}
{\displaystyle\frac{\Gamma(1-\alpha)^2}{\Gamma(1-2\alpha)}+\frac{\Gamma(1+2\alpha)}{\Gamma(1+\alpha)^2}-1}.
\end{equation}
Moreover, the numerator is at most $\alpha^2\zeta(1+2\alpha-\sigma)$, which gives the closed form
\begin{equation}\label{eq:Ccf}
C^U(\sigma)\;\le\;C^{\rm cf}(\sigma):=\min_{\sigma/2<\alpha<1/2}\;
\frac{\alpha^2\,\zeta(1+2\alpha-\sigma)}
{\displaystyle\frac{\Gamma(1-\alpha)^2}{\Gamma(1-2\alpha)}+\frac{\Gamma(1+2\alpha)}{\Gamma(1+\alpha)^2}-1},
\end{equation}
and, for $0<\sigma<\tfrac12$,
\begin{equation}\label{eq:simple}
C^{\rm cf}(\sigma)\;\le\;\sigma^2\,\zeta(1+\sigma)\,\frac{\Gamma(1-2\sigma)}{\Gamma(1-\sigma)^2}
\;=\;\sigma+\gamma_E\,\sigma^2+O(\sigma^3),
\end{equation}
where $\gamma_E=0.577\ldots$ is Euler's constant. Consequently, as $\sigma\to0$,
\begin{equation}\label{eq:asym}
\begin{aligned}
1\;&\le\;\zeta(1+\sigma)\,\Cor(\sigma)\;\le\;1+2\gamma_E\,\sigma+O(\sigma^2),\\[2pt]
\tfrac12\;&\le\;\zeta(1+\sigma)\,C_c(\sigma)\;\le\;1+2\gamma_E\,\sigma+O(\sigma^2).
\end{aligned}
\end{equation}
\end{corollary}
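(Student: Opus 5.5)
Plan: derive everything from theorem~\ref{thm:main} with $\pi=\pi_\alpha$, using lemma~\ref{lem:sibuya}. Write $S_2=\sum_r\pi_\alpha(r)^2=\Gamma(1+2\alpha)/\Gamma(1+\alpha)^2-1$ and $S_\sigma=\sum_r\pi_\alpha(r)^2r^{1+\sigma}$. Since $1/p_r=r^{1+\sigma}/C$, the right-hand side of \eqref{eq:main} equals $1-W(\alpha)\bigl(S_\sigma/C-S_2\bigr)$. For $\sigma/2<\alpha<1/2$ both $W(\alpha)$ (by (S4)) and $S_\sigma$ (by (S2), since $2+2\alpha-1-\sigma>1$) are finite. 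The bound is positive exactly when
\[
C>\frac{S_\sigma}{1/W(\alpha)+S_2}=\frac{S_\sigma}{\Gamma(1-\alpha)^2/\Gamma(1-2\alpha)+\Gamma(1+2\alpha)/\Gamma(1+\alpha)^2-1}.
\]
We must check $C\le1$. If this quantity is $\ge1$, the claim $\Cor\le C^U$ is trivial by \eqref{eq:order}. Otherwise, every $C$ strictly between it and $1$ gives $\Cor\le C$. Letting $C$ decrease to the ratio and minimizing over $\alpha$ gives \eqref{eq:CU}. The minimum may be an infimum; if that matters, I would argue continuity in $\alpha$ or simply read the statement as an infimum.

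For \eqref{eq:Ccf}, (S2) gives $\pi_\alpha(r)^2r^{1+\sigma}\le\alpha^2r^{-1-2\alpha+\sigma}$. Summing yields $S_\sigma\le\alpha^2\zeta(1+2\alpha-\sigma)$, and the denominator is unchanged.

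For \eqref{eq:simple}, take $\sigma<1/2$ and plug in $\alpha=\sigma$, which lies in the window. The numerator becomes $\sigma^2\zeta(1+\sigma)$. Since $S_2\ge0$, we may drop it from the denominator, which only increases the ratio. This leaves $\sigma^2\zeta(1+\sigma)\Gamma(1-2\sigma)/\Gamma(1-\sigma)^2$.

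For the expansion, use $\zeta(1+\sigma)=\sigma^{-1}+\gamma_E+O(\sigma)$ and $\log\Gamma(1-x)=\gamma_E x+O(x^2)$. These give $\Gamma(1-2\sigma)/\Gamma(1-\sigma)^2=1+O(\sigma^2)$, since the linear terms $2\gamma_E\sigma-2\gamma_E\sigma$ cancel. The product is then $\sigma+\gamma_E\sigma^2+O(\sigma^3)$. Multiplying by $\zeta(1+\sigma)=\sigma^{-1}(1+\gamma_E\sigma+O(\sigma^2))$ gives $1+2\gamma_E\sigma+O(\sigma^2)$. Combining this with proposition~\ref{prop:lower} and $C_c\le\Cor$ yields \eqref{eq:asym}.

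The only delicate point is the bookkeeping: the rearrangement of \eqref{eq:main} must be valid, which needs $C>0$ and finiteness of the sums. No step is a real obstacle, given theorem~\ref{thm:main} and lemma~\ref{lem:sibuya}.
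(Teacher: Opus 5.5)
Your proposal is correct and follows the paper's proof essentially step by step. Like the paper, you apply theorem~\ref{thm:main} with Sibuya steps, rearrange positivity into $C>S_\sigma/(1/W+S_2)$, bound the numerator via (S2), take $\alpha=\sigma$ and drop the nonnegative term, then expand the zeta and Gamma factors; your explicit handling of the restriction $C\le1$ (via $\Cor\le1$) and of the minimum possibly being only an infimum adds care on two points the paper passes over.
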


Lemma~\ref{lem:sibuya} and corollary~\ref{cor:bound} are proved in section~\ref{sec:sibuya}. The bound $C^U$ is
shown in figure~\ref{fig:bounds}, together with the lower bounds of proposition~\ref{prop:lower}.

Since $\zeta(1+\sigma)=1/\sigma+\gamma_E+O(\sigma)$, the bounds \eqref{eq:asym} mean that the oriented threshold is
$\Cor=\sigma+O(\sigma^2)$, and that for the unoriented model the ratio $C_c/\sigma$ lies between $1/2$ and $1$ in
the limit $\sigma\to0$.

The result for the oriented model has a simple interpretation. A site has on average $C\zeta(1+\sigma)$ open bonds
to sites on its right. If the sites that are reached from $0$ along different paths were all different, as on a
tree, the forward cluster would be the family tree of a branching process in which every site has on average
$C\zeta(1+\sigma)$ offspring. Such a process survives forever with positive probability exactly when the mean
number of offspring exceeds one, that is, for $C>1/\zeta(1+\sigma)$. This is the mean-field value of the
threshold. Proposition~\ref{prop:lower} states that the true threshold is not lower, and \eqref{eq:asym} shows that
for small $\sigma$ it is only slightly higher. In this sense the oriented transition is mean-field-like for small
$\sigma$.

\subsection{Coarse-graining}

The bound $C^U(\sigma)$ is useful only when it is below $1$, which is the case for $\sigma<0.667$
(section~\ref{sec:numerics}). The reason is the cost of the shared bonds. In theorem~\ref{thm:main} a shared bond
of length $r$ costs $1/p_r-1=r^{1+\sigma}/C-1$. This is at least $1/C-1$, the cost of the shortest bond, and it
grows rapidly with $r$, the more so the larger $\sigma$ is.

To reduce the cost we replace the sites by blocks of $L$ consecutive sites (figure~\ref{fig:blocks}). Consider two
blocks at distance $r$, in units of blocks. There are $L^2$ pairs of sites with one site in each block. The
distance between the two sites of a pair is about $rL$, so each pair is connected with probability about
$C/(rL)^{1+\sigma}$. The mean number of bonds between the two blocks is therefore about
\[
L^2\cdot\frac{C}{(rL)^{1+\sigma}}=\frac{C\,L^{1-\sigma}}{r^{1+\sigma}},
\]
which is larger by the factor $L^{1-\sigma}$ than the mean number of bonds, $C/r^{1+\sigma}$, between two single
sites at distance $r$. A connection between two blocks is therefore much more likely than a connection between two
sites, and it costs less when two paths share it. This gain is not free: a connection between two blocks is of use
only if each of the blocks is connected inside, and this requirement adds a term to the bound.

\begin{figure}[t]
\centering
\begin{tikzpicture}[x=0.9cm,y=0.9cm]
  \definecolor{acc}{RGB}{208,59,59}
  \foreach \b in {0,1,2,3} \draw[rounded corners=4pt,gray!70,fill=gray!8] (4*\b-0.38,-0.5) rectangle (4*\b+3.38,0.95);
  \foreach \i in {0,...,15} \fill[black!70] (\i,0) circle (1.8pt);
  \foreach \a/\b in {0/1,2/3,4/5,6/7,9/10,10/11,12/13,13/14,14/15} \draw[thick] (\a,0) -- (\b,0);
  \draw[thick] (1,0) to[bend left=50] (3,0);
  \draw[thick] (8,0) to[bend left=50] (10,0);
  \draw[very thick,acc] (3,0) to[bend left=38] (9,0);
  \draw[very thick,acc] (10,0) to[bend left=42] (13,0);
  \node[acc] at (6,1.75) {\small link, $r=2$};
  \node[acc] at (12.3,1.35) {\small link, $r=1$};
  \node[below] at (1.5,-0.55) {$B_0$: good};
  \node[below] at (5.5,-0.55) {$B_1$: not good};
  \node[below] at (9.5,-0.55) {$B_2$: good};
  \node[below] at (13.5,-0.55) {$B_3$: good};
\end{tikzpicture}
\caption{Blocks of $L=4$ sites. Black lines are open bonds inside the blocks. The blocks $B_0$, $B_2$ and $B_3$
are good: every two of their sites are connected by a path inside the block. The block $B_1$ is not good, because
inside it the sites split into two pairs. Red lines are open bonds between different blocks: $B_0$ is linked to
$B_2$, at distance $r=2$ in units of blocks, and $B_2$ is linked to $B_3$. The blocks $B_0$, $B_2$, $B_3$ form an
open path of blocks, and all their sites belong to one cluster.}
\label{fig:blocks}
\end{figure}

Let $B_i=\{iL,\,iL+1,\dots,(i+1)L-1\}$ be the $i$th block. A block is \emph{good} if every two of its sites are
connected by an open path that stays inside the block, that is, a path that uses only bonds between sites of the
block. It is not required that every two sites of the block are connected directly by a bond. We denote by $g_L$
the probability that a block is good. Two blocks $B_i$ and $B_j$ are \emph{linked} if at least one bond between a
site of $B_i$ and a site of $B_j$ is open. For $j-i=r$ this has the probability
\begin{equation}\label{eq:pL}
p^{(L)}_r=1-\prod_{x\in B_0,\;y\in B_r}\bigl(1-p_{y-x}\bigr).
\end{equation}

\begin{theorem}\label{thm:block}
For all $L\ge1$, all $\pi$ with $W(\pi)<\infty$ and all $0<C\le1$,
\begin{equation}\label{eq:block}
\theta(C)\ge 1-W(\pi)\Bigl[(1-g_L)+\sum_{r\ge1}\pi(r)^2\Bigl(\frac{1}{p^{(L)}_r}-1\Bigr)\Bigr].
\end{equation}
\end{theorem}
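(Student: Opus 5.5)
The plan is to repeat the second-moment argument of theorem~\ref{thm:main} at the level of blocks. Run a random monotone path $0=Y_0<Y_1<\cdots$ of block indices with step distribution $\pi$, independent of the bonds. Call the block path open up to step $n$ if the blocks $B_{Y_0},\dots,B_{Y_n}$ are all good and $B_{Y_{k-1}}$ is linked to $B_{Y_k}$ for every $k\le n$. When this happens, all sites of these blocks lie in one cluster, which contains $0\in B_0$. For every $L\ge1$ and every $n$, the event $\{$some block path of $n$ steps from block $0$ is open$\}$ then implies that the cluster of $0$ contains a site of $B_{Y_n}$, with $Y_n\ge n$. The events decrease in $n$, so $\theta(C)\ge\lim_n\PP(\cdot)$. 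Only $\theta$ is claimed here, because a path inside a good block need not be monotone.

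For fixed $n$ define
\[
Z_n=\EE_Y\Bigl[\frac{\ind\{\text{path }Y\text{ open up to }n\}}{\PP(\text{path }Y\text{ open up to }n\mid Y)}\Bigr].
\]
Goodness events of distinct blocks and link events of distinct block pairs depend on disjoint sets of bonds. The path $Y$ is strictly increasing, so it uses distinct blocks and distinct pairs. Hence the denominator is $g_L^{\,n+1}\prod_k p^{(L)}_{Y_k-Y_{k-1}}$, and $\EE Z_n=1$. By Cauchy--Schwarz, $\PP(Z_n>0)\ge1/\EE Z_n^2$.

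Writing $\EE Z_n^2$ as an average over two independent paths $Y,Y'$, independence over disjoint bond sets gives
\[
\EE Z_n^2=\EE_{Y,Y'}\Bigl[\prod_{\text{shared blocks}}g_L^{-1}\prod_{\text{shared links of length }r}(p^{(L)}_r)^{-1}\Bigr].
\]
This is where the structure of theorem~\ref{thm:main} must be copied. Every common block index is a renewal point for both paths, and both start afresh there. The contribution can therefore be organized over the successive common sites. At each common site other than the starting block $0$, we pick up a factor $1/g_L$. With probability $\pi(r)^2$ the two paths take the same step $r$, and the next site is again common, now with an extra factor $1/p^{(L)}_r$. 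Otherwise the paths separate and meet again at some later common block, or never.

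Block $0$ is visited by both paths. I would handle it by conditioning on $B_0$ being good; this only multiplies by $g_L$ and does not spoil the bound's form, or alternatively it can be absorbed. The second moment becomes a geometric or renewal-type series. Summing it exactly, as in the proof of theorem~\ref{thm:main}, should give
\[
\EE Z^2\le\frac{1}{1-W\,\epsilon},\qquad \epsilon=(g_L^{-1}-1)+\sum_r\pi(r)^2\bigl(g_L^{-1}/p^{(L)}_r-1\bigr),
\]
or a comparable expression. The clean form $(1-g_L)+\sum\pi(r)^2(1/p^{(L)}_r-1)$ stated in \eqref{eq:block} then suggests that, instead of Cauchy--Schwarz for $Z$, one should use a union-type correction. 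Take $\PP(\text{all visited blocks good and }Z'>0)\ge\PP(Z'>0)-\PP(\text{some block on the common structure is bad})$, where $Z'$ uses link events only. By theorem~\ref{thm:main} applied to block links, $\PP(Z'>0)\ge1-W\sum_r\pi(r)^2(1/p^{(L)}_r-1)$.

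The main obstacle is making the goodness correction exactly $W(1-g_L)$. I would try the following route. Let $Y$ be a single path drawn from the size-biased measure $\nu(dY)\propto \EE_{\text{bonds}}[\text{weight}]$ on the event $Z'>0$; this is the standard Lyons-type construction. The expected number of bad blocks along this path is then controlled by $(1-g_L)$ times the mean number of visits, weighted through the second-moment overlap, which is bounded by $W$. Alternatively, the simplest fix is to note $1/(1-x)\ge1+x$ and expand: the bound $\PP(Z>0)\ge1/\EE Z^2\ge 1-W\epsilon'$ with $\epsilon'$ linearized. Then check that the linear terms in $1/g_L-1$ can be replaced by $1-g_L$ after bounding $g_L\le1$. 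The case where the right-hand side of \eqref{eq:block} is nonpositive is trivial. If the exact constant fails, I would first verify the geometric-series derivation with the goodness factor treated as one more ``cost'' per common site, since that already yields a positive lower bound whenever the bracket times $W$ is small.
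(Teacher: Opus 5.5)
\textbf{There is a genuine gap: the final summation is never carried out.} Your setup is the paper's. Block paths are open when all their blocks are good and consecutive blocks are linked, $\EE Z_n=1$, and $\EE Z_n^2$ is bounded by an average over two independent infinite block paths of $\prod g_L^{-1}\prod (p^{(L)}_r)^{-1}$ over common blocks and shared links. The specific constant in \eqref{eq:block} comes only from that last summation, and the one you guess is not what the renewal structure gives.

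Your $\epsilon=(g_L^{-1}-1)+\sum_r\pi(r)^2(g_L^{-1}/p^{(L)}_r-1)$ is too large. Attach the factor $1/g_L$ of each new common block to the segment that ends there. Then an (A) segment has weight $1/(g_Lp^{(L)}_r)$ and a (B) segment has weight $1/g_L$. The mean segment weight is $m_L/g_L$, with $m_L=\rho+S_L$ and $S_L=\sum_r\pi(r)^2(1/p^{(L)}_r-1)$. Its excess over $\rho$ is $\rho(g_L^{-1}-1)+S_L/g_L$, which is smaller than your $\epsilon$ by $(g_L^{-1}-1)(1-\rho+\sum_r\pi(r)^2)$.

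The factor $1/g_L$ of block $0$ must be kept, not conditioned away, because it produces the stated constant. With $k$ segments there are $k+1$ common blocks, so
\[
V_L=\sum_{k\ge0}g_L^{-(k+1)}m_L^{\,k}(1-\rho)=\frac{1-\rho}{g_L-m_L},\qquad
\frac1{V_L}=W\bigl(g_L-\rho-S_L\bigr)=1-W\bigl[(1-g_L)+S_L\bigr],
\]
using $1-\rho=1/W$. Equivalently, $g_L\bigl(1-W[\rho(g_L^{-1}-1)+S_L/g_L]\bigr)=1-W[(1-g_L)+S_L]$, since $W\rho=W-1$. The prefactor $g_L$ from block $0$, combined with $\rho=1-1/W$, is what turns the per-segment cost $g_L^{-1}-1$ into $1-g_L$.

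Your $1-W\epsilon$, even if justified, is strictly weaker than \eqref{eq:block} whenever $g_L<1$. The route you list last, the geometric series with goodness as one more cost per common block, is the paper's proof. Done exactly and with block $0$ included, it gives the exact constant.

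\textbf{The alternatives you favour instead do not work.}
\begin{itemize}
\item Since $g_L^{-1}-1\ge1-g_L$, replacing the linear terms in $g_L^{-1}-1$ by $1-g_L$ lowers an upper bound on $\EE Z_n^2$. The result is then no longer a bound.
\item The union-type correction is ill-posed. For $g_L<1$, every fixed infinite block path contains infinitely many bad blocks almost surely, so the subtracted probability is $1$. Also, $\{Z'_n>0\text{ for all }n\}$ does not single out a path to which the correction could be applied. The goodness events have to sit inside the second-moment weights, as in your first computation.
\item The size-biased construction is not developed far enough to be checked.
\end{itemize}
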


Here the random monotone path is a path of blocks. The sum in \eqref{eq:block} is the mean cost of a shared link
between blocks, and the new term $1-g_L$ is the cost of the requirement that the blocks are good. For $L=1$ this
is theorem~\ref{thm:main}, since $g_1=1$ and $p^{(1)}_r=p_r$. For $L\ge2$ the blocks are connected
inside by bonds in both directions, so theorem~\ref{thm:block} concerns the unoriented model only.

\begin{corollary}\label{cor:less1}
Let $0<\sigma<1$ and $\sigma/2<\alpha<\frac12$, and let $L\ge2$ be the smallest integer with
$L^{1-\sigma}\ge4\,W(\alpha)\,\alpha^2\zeta(1+2\alpha-\sigma)$. Then
\[
C_c(\sigma)\le1-\frac{1}{2\,W(\alpha)\,(L-1)}<1 .
\]
\end{corollary}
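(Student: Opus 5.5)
We apply theorem~\ref{thm:block} with $\pi=\pi_\alpha$ and with $C=1-\delta$, where $\delta=1/(2W(\alpha)(L-1))$. If we show that the bracket in \eqref{eq:block} is strictly less than $1/W(\alpha)$, then $\theta(C)>0$, and hence $C_c\le C$. The bracket has two terms, and we bound each by a fraction of $1/W$.

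\emph{Goodness term.} A block is good if the $L-1$ nearest-neighbour bonds inside it are all open. Each such bond is open with probability $p_1=C$, so
\[
g_L\ge C^{L-1}=(1-\delta)^{L-1}\ge 1-(L-1)\delta ,
\]
which gives $1-g_L\le(L-1)\delta=1/(2W)$. This is presumably how the particular form of $\delta$ was chosen. This term alone uses half of the budget, so the link-cost term must be strictly below $1/(2W)$.

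\emph{Link term.} We use $1/p-1=(1-p)/p\le 1/p$ and bound $p^{(L)}_r$ from below. For $x\in B_0$ and $y\in B_r$ we have $y-x\le (r+1)L$, so
\[
\mu_r:=\sum_{x,y}p_{y-x}\ge \frac{L^2C}{((r+1)L)^{1+\sigma}}=\frac{CL^{1-\sigma}}{(r+1)^{1+\sigma}} .
\]
Then $p^{(L)}_r\ge 1-e^{-\mu_r}$. One could instead use $1-\prod(1-p)\ge 1-e^{-\mu}$ together with $1/(1-e^{-\mu})-1=1/(e^\mu-1)\le1/\mu$. This gives
\[
\frac1{p^{(L)}_r}-1\le\frac{1}{\mu_r}\le\frac{(r+1)^{1+\sigma}}{CL^{1-\sigma}} .
\]
We then need $\sum_r\pi_\alpha(r)^2(r+1)^{1+\sigma}$. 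By (S2) this is at most $\alpha^2\sum_r r^{-2-2\alpha}(r+1)^{1+\sigma}\le\alpha^2 2^{1+\sigma}\zeta(1+2\alpha-\sigma)$, using $(r+1)\le 2r$. The factor $2^{1+\sigma}$ exceeds the factor $4$ built into the definition of $L$ by too little margin: we need $2^{1+\sigma}/(CL^{1-\sigma})\cdot\alpha^2\zeta<1/(2W)$, that is, $L^{1-\sigma}>2^{2+\sigma}W\alpha^2\zeta/C$, which is slightly more than $4W\alpha^2\zeta$.

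The main obstacle is therefore this constant. To fix it, we would bound the distance more carefully, since the typical distance is $rL$ and not $(r+1)L$. For $r\ge1$, at least a quarter of the pairs, namely those with $x$ in the right half of $B_0$ and $y$ in the left half of $B_r$, satisfy $y-x\le rL$. Alternatively, by convexity of $t\mapsto t^{-1-\sigma}$, Jensen's inequality gives $\mu_r\ge L^2C/(rL)^{1+\sigma}$ exactly, because the mean of $y-x$ over pairs is $rL$. This is the cleanest choice. It gives $1/\mu_r\le r^{1+\sigma}/(CL^{1-\sigma})$, and then
\[
\sum_r\pi_\alpha(r)^2\Bigl(\frac1{p^{(L)}_r}-1\Bigr)\le\frac{\alpha^2\zeta(1+2\alpha-\sigma)}{CL^{1-\sigma}}\le\frac{1}{4WC}.
\]
We need this to be strictly less than $1/(2W)$, which holds when $C>1/2$. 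Since $L\ge2$ and $W\ge1$, we have $\delta\le1/2$. The case $\delta=1/2$ (that is, $W=1$ and $L=2$) is impossible, because $W(\alpha)>1$ for $\alpha>0$. Hence $C>1/2$ and the bracket is strictly below $1/W$, so $\theta(1-\delta)>0$. For the strict inequality $C_c<1$ we only need $\delta>0$, which holds because $W<\infty$ by (S4) and $L$ is finite. Such an $L$ exists because $1-\sigma>0$.
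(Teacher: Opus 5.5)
Your proof is correct and follows the paper's argument: Bernoulli's inequality applied to $g_L\ge C^{L-1}$ for the goodness term, and Jensen's bound (B2) combined with $1/(e^{CD_r}-1)\le 1/(CD_r)$ and (S2) for the link term, with the choice of $L$ making the link term at most $1/(4WC)$. The only difference is that you work at exactly $C=1-\delta$ and make the link term the strict inequality, which is why you need $W(\alpha)\ge 1+\alpha^2>1$; the paper instead takes any $C$ with $1-C<\delta$, so that the goodness term is strictly below $1/(2W)$ and $C\ge\frac12$ is enough for the link term.
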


Theorem~\ref{thm:block} and corollary~\ref{cor:less1} are proved in section~\ref{sec:blocks}. That $C_c<1$ for all
$\sigma<1$ is known from~\cite{newman1986}. The point of corollary~\ref{cor:less1} is that
the bound is explicit. It is, however, far from sharp, and much better values are obtained by evaluating
\eqref{eq:block} directly (section~\ref{sec:numerics}).

\subsection{Comparison with the numerical thresholds}

\begin{table}[t]
\centering
\caption{Bounds on the threshold $C_c$ of LR percolation and on the threshold $\Cor$ of oriented LR percolation,
rounded to the digits shown. The Monte Carlo values are from table~I of~\cite{gori2017}. The upper bound on $C_c$ is
$C^U$ of corollary~\ref{cor:bound} for $\sigma\le0.6$, and the bound of theorem~\ref{thm:block}, with the block
size $L$ in parentheses, for $\sigma\ge0.65$. A dash means that $C^U>1$.}
\label{tab:numbers}
\medskip
\begin{tabular}{c@{\qquad}ccc@{\qquad}cc}
\toprule
 & \multicolumn{3}{c}{$C_c$\hspace*{2em}} & \multicolumn{2}{c}{$\Cor$} \\
$\sigma$ & lower~\cite{schulman1983} & Monte Carlo~\cite{gori2017} & upper & lower & upper, $C^U$ \\
\midrule
0.1 & 0.0472 & 0.0477 & 0.0949 & 0.0945 & 0.0949 \\
0.2 & 0.0894 & 0.0932 & 0.1851 & 0.1788 & 0.1851 \\
0.3 & 0.1272 & 0.1405 & 0.2814 & 0.2543 & 0.2814 \\
0.4 & 0.1610 & 0.1935 & 0.3969 & 0.3220 & 0.3969 \\
0.5 & 0.1914 & 0.2548 & 0.5501 & 0.3828 & 0.5501 \\
0.6 & 0.2187 & 0.3271 & 0.7747 & 0.4375 & 0.7747 \\
0.65 & 0.2314 & 0.3683 & 0.9231 \ (4) & 0.4628 & 0.9341 \\
0.7 & 0.2434 & 0.4138 & 0.9696 \ (8) & 0.4868 & -- \\
0.75 & 0.2548 & 0.4642 & 0.9936 \ (32) & 0.5096 & -- \\
0.8 & 0.2656 & 0.5210 & 0.99966 \ (400) & 0.5313 & -- \\
\bottomrule
\end{tabular}
\end{table}

Figure~\ref{fig:bounds} and table~\ref{tab:numbers} compare the bounds with the Monte Carlo thresholds
of~\cite{gori2017}. For the oriented model the two bounds are very close for small $\sigma$: their ratio is
$1.0006$ at $\sigma=0.05$, $1.005$ at $\sigma=0.1$ and $1.035$ at $\sigma=0.2$. For the unoriented model the upper
bound is about twice the Monte Carlo value for small $\sigma$, and it deteriorates as $\sigma$ grows. The closed
form $C^{\rm cf}$ of \eqref{eq:Ccf} is weaker than $C^U$: it equals $0.106$, $0.228$, $0.377$, $0.578$ and $0.871$
for $\sigma=0.1,\dots,0.5$, and it exceeds $1$ for $\sigma\ge0.54$. The origin of the factor of two between the
upper bound and the actual threshold of the unoriented model, at small $\sigma$, is discussed in
section~\ref{sec:discussion}.

\section{Proof of theorem~\ref{thm:main}}
\label{sec:proof}

There are two independent sources of randomness: the bonds, and the random path. Below, $\PP$ and $\EE$ refer to the
bonds, and $\EE_\gamma$ is the average over the random path with the bonds held fixed. We may assume $W<\infty$.

\step{1}{a quantity that detects open paths}
Fix $n\ge1$ and let $\gamma_n=(X_0,\dots,X_n)$ be the first $n$ steps of the random path. A fixed monotone path
$(0=x_0<x_1<\dots<x_n)$ uses $n$ different bonds. Bonds are independent, so the path is open with probability
\[
P(\gamma_n)=\prod_{k=1}^{n}p_{x_k-x_{k-1}}>0 .
\]
The path itself is chosen with probability $\pi(\gamma_n)=\prod_{k=1}^n\pi(x_k-x_{k-1})$. We define
\begin{equation}\label{eq:Zn}
Z_n=\EE_\gamma\Bigl[\frac{\ind\{\gamma_n\text{ open}\}}{P(\gamma_n)}\Bigr]
=\sum_{\gamma_n}\pi(\gamma_n)\,\frac{\ind\{\gamma_n\text{ open}\}}{P(\gamma_n)},
\end{equation}
where the sum runs over all monotone paths of $n$ steps from $0$. Here $\ind\{A\}$ denotes the indicator of an
event $A$: it equals $1$ if $A$ occurs and $0$ otherwise. Thus a path contributes $\pi(\gamma_n)/P(\gamma_n)$ to
$Z_n$ if it is open, and nothing if it is not. The average $\EE_\gamma$ in \eqref{eq:Zn} is over the random path
only, with the bonds held fixed. Hence $Z_n$ is still a random variable: its value depends on which bonds are
open. It counts the open paths, each with the weight $\pi(\gamma_n)/P(\gamma_n)$, and it has two properties.

(a) \emph{Its mean over the bonds is $1$.} The mean of an indicator is the probability of its event, so the mean
of $\ind\{\gamma_n\text{ open}\}$ over the bonds is $P(\gamma_n)$. Averaging \eqref{eq:Zn} over the bonds, term by
term, therefore gives
\[
\EE Z_n=\sum_{\gamma_n}\pi(\gamma_n)\,\frac{P(\gamma_n)}{P(\gamma_n)}=\sum_{\gamma_n}\pi(\gamma_n)=1 .
\]

(b) \emph{If $Z_n>0$, the forward cluster of $0$ contains a site at distance at least $n$.} Indeed, if $Z_n>0$ then
at least one term of the sum is not zero, so some monotone path of $n$ steps from $0$ is open. Each step has length
at least $1$, so the path ends at a site $x_n\ge n$.

\step{2}{the second-moment inequality}
For every random variable $Z\ge0$ with $0<\EE Z^2<\infty$,
\begin{equation}\label{eq:PZ}
\PP(Z>0)\;\ge\;\frac{(\EE Z)^2}{\EE Z^2}.
\end{equation}
Indeed, since $Z=Z\,\ind\{Z>0\}$, the Cauchy--Schwarz inequality gives
\[
(\EE Z)^2=\bigl(\EE\bigl[Z\,\ind\{Z>0\}\bigr]\bigr)^2\le\EE\bigl[Z^2\bigr]\;\EE\bigl[\ind\{Z>0\}^2\bigr]=\EE Z^2\;\PP(Z>0).
\]
With property (a), inequality~\eqref{eq:PZ} for $Z=Z_n$ reads
\begin{equation}\label{eq:PZn}
\PP(Z_n>0)\ge\frac{1}{\EE Z_n^2}.
\end{equation}
We therefore need an upper bound on $\EE Z_n^2$ that does not depend on $n$.

\step{3}{the second moment counts shared bonds}
We write $Z_n^2$ as a double sum over two paths $\gamma_n$ and $\gamma'_n$,
\[
Z_n^2=\sum_{\gamma_n}\sum_{\gamma'_n}\pi(\gamma_n)\,\pi(\gamma'_n)\,
\frac{\ind\{\gamma_n\text{ open}\}\,\ind\{\gamma'_n\text{ open}\}}{P(\gamma_n)\,P(\gamma'_n)} .
\]
Both paths are open exactly when every bond of the union $\gamma_n\cup\gamma'_n$ is open. A bond that both paths use,
a \emph{shared} bond, appears only once in the union. With $p_e=p_r$ for a bond $e$ of length $r$,
\[
\PP(\gamma_n\text{ and }\gamma'_n\text{ are open})=\prod_{e\in\gamma_n\cup\gamma'_n}p_e
=\frac{P(\gamma_n)\,P(\gamma'_n)}{\prod_{e\in\gamma_n\cap\gamma'_n}p_e}.
\]
Averaging the double sum over the bonds, the factors $P(\gamma_n)P(\gamma'_n)$ cancel and we obtain
\[
\EE Z_n^2=\sum_{\gamma_n}\sum_{\gamma'_n}\pi(\gamma_n)\,\pi(\gamma'_n)\prod_{e\in\gamma_n\cap\gamma'_n}\frac1{p_e}
=\EE_{\gamma,\gamma'}\Bigl[\prod_{e\in\gamma_n\cap\gamma'_n}\frac1{p_e}\Bigr].
\]
Here $\gamma$ and $\gamma'$ are two \emph{independent} random monotone paths, and $\EE_{\gamma,\gamma'}$ is the
average over both. In words, every shared bond of length $r$ contributes a factor $1/p_r\ge1$. If two paths never
shared a bond, we would have $\EE Z_n^2=1$ and $\PP(Z_n>0)=1$.

The bonds shared by the first $n$ steps are among the bonds shared by the full infinite paths, and every factor is
at least $1$. Therefore
\begin{equation}\label{eq:V}
\EE Z_n^2\;\le\;V:=\EE_{\gamma,\gamma'}\Bigl[\prod_{e\in\gamma\cap\gamma'}\frac1{p_e}\Bigr]\qquad\text{for every }n,
\end{equation}
and by \eqref{eq:PZn}, $\PP(Z_n>0)\ge1/V$ for every $n$.

\step{4}{from finite $n$ to an infinite cluster}
If a path of $n+1$ steps is open, its first $n$ steps form an open path of $n$ steps. Hence $Z_{n+1}>0$ implies
$Z_n>0$, that is, the events $\{Z_n>0\}$ decrease with $n$. For decreasing events the probability of the
intersection is the limit of the probabilities, so
\[
\PP(Z_n>0\text{ for all }n)=\lim_{n\to\infty}\PP(Z_n>0)\ge\frac1V .
\]
If $Z_n>0$ for all $n$, then by property (b) the forward cluster of $0$ contains sites at arbitrarily large
distance, so it is infinite. Therefore
\begin{equation}\label{eq:theta}
\thor(C)\ge\frac1V .
\end{equation}
The rest of the proof is the computation of $V$, which involves only the two random paths and not the bonds.

\begin{figure}[t]
\centering
\begin{tikzpicture}[x=0.76cm,y=0.76cm]
  \definecolor{acc}{RGB}{208,59,59}
  \foreach \c in {0,5,9} \draw[densely dotted,gray] (\c,-1.9) -- (\c,1.75);
  \draw[gray!55] (-0.5,0) -- (19.7,0);
  \foreach \i in {0,...,19} \fill[black!60] (\i,0) circle (1.6pt);
  \foreach \a/\b in {0/3,3/5,9/12,12/17} \draw[thick] (\a,0) to[bend left=55] (\b,0);
  \draw[thick] (17,0) to[bend left=28] (19.7,0.95);
  \foreach \a/\b in {0/2,2/5,9/10,10/14,14/18} \draw[thick,dashed] (\a,0) to[bend right=55] (\b,0);
  \draw[thick,dashed] (18,0) to[bend right=25] (19.7,-0.55);
  \draw[very thick,acc] (5,0) to[bend left=55] (9,0);
  \draw[very thick,acc,dashed] (5,0) to[bend right=55] (9,0);
  \foreach \c in {0,5,9} \fill[acc] (\c,0) circle (3pt);
  \node[above] at (0,1.75) {$c_0=0$};
  \node[above] at (5,1.75) {$c_1$};
  \node[above] at (9,1.75) {$c_2$};
  \node at (1.5,1.05) {$\gamma$};
  \node at (3.5,-1.15) {$\gamma'$};
  \node[acc] at (7,1.4) {\small shared bond};
  \draw[decorate,decoration={brace,mirror,amplitude=4pt}] (0.15,-2.0) -- (4.85,-2.0)
     node[midway,below=5pt] {\small (B): weight $1$};
  \draw[decorate,decoration={brace,mirror,amplitude=4pt}] (5.15,-2.0) -- (8.85,-2.0)
     node[midway,below=5pt] {\small (A): weight $1/p_4$};
  \draw[decorate,decoration={brace,mirror,amplitude=4pt}] (9.15,-2.0) -- (19.7,-2.0)
     node[midway,below=5pt] {\small (C): the paths do not meet again};
\end{tikzpicture}
\caption{Two independent monotone paths, $\gamma$ (solid, drawn above the lattice) and $\gamma'$ (dashed, drawn
below). The common sites are $c_0=0$, $c_1=5$ and $c_2=9$. From $c_0$ to $c_1$ the paths use different bonds. From
$c_1$ to $c_2$ both paths make the same jump of length $4$, so they share this bond. After $c_2$ they never meet
again. The cases (A), (B) and (C) are defined in step~6. In this example the product in \eqref{eq:V} equals
$1/p_4$.}
\label{fig:paths}
\end{figure}

\step{5}{where two paths can share a bond}
We call a site \emph{common} if both paths visit it. Site $0$ is common. Let
\[
0=c_0<c_1<c_2<\cdots
\]
be the common sites in increasing order, and let $N$ be their number (figure~\ref{fig:paths}).

A path uses the bond $\{a,b\}$, $a<b$, exactly when it visits $a$ and then jumps from $a$ to $b$ in one step. In
that case it visits no site strictly between $a$ and $b$. So if both paths use $\{a,b\}$, then $a$ and $b$ are
common sites and there is no common site between them: $a=c_i$ and $b=c_{i+1}$ for some $i$. Conversely, if both
paths jump from $c_i$ to $c_{i+1}$ in one step, they share that bond. Thus the shared bonds are exactly the pairs of
consecutive common sites $c_i,c_{i+1}$ that \emph{both} paths join by a single jump.

We call the stretch from $c_i$ to $c_{i+1}$ a \emph{segment}, and give it the weight
\[
w_i=\begin{cases}1/p_r & \text{if both paths jump from $c_i$ to $c_{i+1}$ in one step, of length } r=c_{i+1}-c_i,\\[2pt]
1 & \text{otherwise.}\end{cases}
\]
Then the product in \eqref{eq:V} is the product of the weights of all segments,
\begin{equation}\label{eq:segments}
\prod_{e\in\gamma\cap\gamma'}\frac1{p_e}=\prod_{i=0}^{N-2}w_i .
\end{equation}

\step{6}{the paths start afresh at every common site}
The steps of a path are independent. Hence, given that a path visits a site $x$, its steps after $x$ do not depend
on how it reached $x$: the rest of the path is a new random monotone path that starts at $x$. This is the renewal
property. We apply it to both paths. Given that $x$ is a common site, the two paths after $x$ are two new
independent monotone paths that start at $x$, and they are independent of everything to the left of $x$.

Therefore, after every common site the same thing happens as after site $0$. Exactly one of three cases occurs:
\begin{itemize}
\item[(A)] Both paths make the same jump, of some length $r$. The probability is $\pi(r)^2$. The next common site
is then at distance $r$, and the segment is a shared bond with weight $1/p_r$.
\item[(B)] The paths meet again, but not as in (A). The segment has weight $1$.
\item[(C)] The paths never meet again. There is no further segment.
\end{itemize}
Let $\rho$ be the probability that two independent paths that start at the same site meet again, that is, the
probability of (A) or (B). The three cases then have the probabilities
\begin{equation}\label{eq:cases}
\text{(A): }\sum_{r\ge1}\pi(r)^2,\qquad\text{(B): }\rho-\sum_{r\ge1}\pi(r)^2,\qquad\text{(C): }1-\rho .
\end{equation}

\step{7}{the return probability is $\rho=1-1/W$}
By step~6, after every common site there is a further one with probability $\rho$, independently of the past. So
the number $N$ of common sites satisfies $\PP(N\ge k+1)=\rho^{k}$ for $k=0,1,2,\dots$, and
\[
\EE N=\sum_{k\ge0}\PP(N\ge k+1)=\sum_{k\ge0}\rho^k=\frac1{1-\rho}.
\]
On the other hand, $N$ is the number of sites that both paths visit. The paths are independent, so the probability
that both visit $x$ is $u(x)^2$, and $\EE N=\sum_{x\ge0}u(x)^2=W$ by \eqref{eq:W}. Comparing the two expressions,
\begin{equation}\label{eq:rho}
1-\rho=\frac1W,\qquad \rho=1-\frac1W .
\end{equation}
Since $W<\infty$ we have $\rho<1$, and the number of common sites is finite.

\step{8}{summing over the segments}
Consider one segment. Its mean weight, where the case (C) of no segment counts as zero, is by \eqref{eq:cases}
\[
m=\underbrace{\sum_{r\ge1}\pi(r)^2\,\frac1{p_r}}_{\text{(A)}}+\underbrace{\Bigl(\rho-\sum_{r\ge1}\pi(r)^2\Bigr)\cdot1}_{\text{(B)}}
=\rho+S,\qquad S:=\sum_{r\ge1}\pi(r)^2\Bigl(\frac1{p_r}-1\Bigr).
\]
By step~6 the successive segments are independent. The paths have exactly $k$ segments when cases (A) or (B)
occur $k$ times and then (C) occurs. The part of the average \eqref{eq:V} that comes from pairs of paths with
exactly $k$ segments is therefore $m^k(1-\rho)$. Summing over $k$ and using \eqref{eq:segments},
\[
V=\sum_{k\ge0}m^k\,(1-\rho)=\frac{1-\rho}{1-m}\qquad\text{if }m<1 .
\]
By \eqref{eq:rho}, $1-\rho=1/W$ and $1-m=1-\rho-S=1/W-S$. Thus
\begin{equation}\label{eq:Vfinal}
V=\frac{1/W}{1/W-S}=\frac{1}{1-WS}\qquad\text{if }WS<1 .
\end{equation}

\step{9}{conclusion}
If $WS<1$, then by \eqref{eq:theta} and \eqref{eq:Vfinal}
\[
\thor(C)\ge\frac1V=1-WS=1-W(\pi)\sum_{r\ge1}\pi(r)^2\Bigl(\frac1{p_r}-1\Bigr).
\]
If $WS\ge1$ the right-hand side is not positive and there is nothing to prove. Finally $\theta\ge\thor$, which
completes the proof of theorem~\ref{thm:main}. \hfill$\square$

\section{Sibuya steps: proofs of lemma~\ref{lem:sibuya} and corollary~\ref{cor:bound}}
\label{sec:sibuya}

\begin{proof}[Proof of lemma~\ref{lem:sibuya}]
\textbf{(S1)} The binomial series is $(1-z)^\alpha=\sum_{r\ge0}\binom{\alpha}{r}(-z)^r$. Comparing the coefficients
of $z^r$ in \eqref{eq:gf},
\begin{align*}
\pi_\alpha(r)=(-1)^{r+1}\binom{\alpha}{r}&=(-1)^{r+1}\,\frac{\alpha(\alpha-1)(\alpha-2)\cdots(\alpha-r+1)}{r!}\\[3pt]
&=\frac{\alpha(1-\alpha)(2-\alpha)\cdots(r-1-\alpha)}{r!}
=\frac{\alpha}{r}\prod_{k=1}^{r-1}\Bigl(1-\frac{\alpha}{k}\Bigr).
\end{align*}
In the second line each of the $r-1$ negative factors $(\alpha-1),\dots,(\alpha-r+1)$ was multiplied by $-1$,
which cancels the sign $(-1)^{r+1}$. All $\pi_\alpha(r)$ are positive, and setting $z=1$ in \eqref{eq:gf} shows that
they sum to $1$.

\textbf{(S2)} We use $1-x\le e^{-x}$ in every factor of the product, and then $\sum_{k=1}^{r-1}1/k\ge\ln r$, which
follows by comparing the sum with the integral of $1/x$ from $1$ to $r$:
\[
\prod_{k=1}^{r-1}\Bigl(1-\frac{\alpha}{k}\Bigr)\le\exp\Bigl(-\alpha\sum_{k=1}^{r-1}\frac1k\Bigr)\le e^{-\alpha\ln r}=r^{-\alpha},
\qquad\text{hence}\qquad \pi_\alpha(r)\le\alpha\,r^{-1-\alpha}.
\]

\textbf{(S3)} To visit a site $x\ge1$ the path must arrive there by a last step, of some length $r$, from the site
$x-r$. Hence, for every step distribution $\pi$,
\[
u(x)=\sum_{r=1}^{x}\pi(r)\,u(x-r)\quad(x\ge1),\qquad u(0)=1 .
\]
Let $U(z)=\sum_{x\ge0}u(x)z^x$ and $\hat\pi(z)=\sum_{r\ge1}\pi(r)z^r$ be the generating functions. Multiplying
the last equation by $z^x=z^r\,z^{x-r}$ and summing over $x\ge1$, the left-hand side gives $U(z)-1$, and on the
right-hand side every pair $r\ge1$, $y=x-r\ge0$ appears exactly once:
\[
U(z)-1=\sum_{r\ge1}\pi(r)z^r\,\sum_{y\ge0}u(y)z^y=\hat\pi(z)\,U(z),\qquad\text{so that}\qquad U(z)=\frac{1}{1-\hat\pi(z)} .
\]
For Sibuya
steps $1-\hat\pi(z)=(1-z)^\alpha$ by \eqref{eq:gf}. Thus $U(z)=(1-z)^{-\alpha}$, and the binomial series gives
\[
u(x)=\frac{\alpha(\alpha+1)\cdots(\alpha+x-1)}{x!}=\frac{\Gamma(x+\alpha)}{\Gamma(\alpha)\,x!}\;\approx\;\frac{x^{\alpha-1}}{\Gamma(\alpha)}
\quad\text{for large }x .
\]

\textbf{(S4)} Let $(a)_k=a(a+1)\cdots(a+k-1)$, with $(a)_0=1$. Gauss's summation theorem~\cite{dlmf} states that
\begin{equation}\label{eq:gauss}
\sum_{k\ge0}\frac{(a)_k\,(b)_k}{(c)_k\,k!}=\frac{\Gamma(c)\,\Gamma(c-a-b)}{\Gamma(c-a)\,\Gamma(c-b)}\qquad\text{for }c>a+b .
\end{equation}
By (S3), $u(x)=(\alpha)_x/x!$. Since $(1)_x=x!$, the sum $W$ is the case $a=b=\alpha$, $c=1$ of \eqref{eq:gauss}:
\[
W(\alpha)=\sum_{x\ge0}\Bigl(\frac{(\alpha)_x}{x!}\Bigr)^2=\frac{\Gamma(1-2\alpha)}{\Gamma(1-\alpha)^2}\qquad\text{for }0<\alpha<\tfrac12 .
\]
For $\alpha\ge\frac12$ the sum diverges, because $u(x)^2\approx x^{2\alpha-2}/\Gamma(\alpha)^2$. In the same way,
by (S1), $\pi_\alpha(r)=-(-\alpha)_r/r!$ for $r\ge1$, while $(-\alpha)_0/0!=1$. The case $a=b=-\alpha$, $c=1$ of
\eqref{eq:gauss} gives
\[
1+\sum_{r\ge1}\pi_\alpha(r)^2=\sum_{r\ge0}\Bigl(\frac{(-\alpha)_r}{r!}\Bigr)^2=\frac{\Gamma(1+2\alpha)}{\Gamma(1+\alpha)^2},
\]
which is the second formula of \eqref{eq:Wsib}.
\end{proof}

\begin{proof}[Proof of corollary~\ref{cor:bound}]
We fix $\alpha$ in the window \eqref{eq:window} and apply theorem~\ref{thm:main} to $\pi=\pi_\alpha$. Since
$1/p_r=r^{1+\sigma}/C$,
\[
S=\sum_{r\ge1}\pi_\alpha(r)^2\Bigl(\frac1{p_r}-1\Bigr)=\frac1C\sum_{r\ge1}\pi_\alpha(r)^2\,r^{1+\sigma}-\sum_{r\ge1}\pi_\alpha(r)^2 .
\]
The right-hand side of \eqref{eq:main} is positive when $S<1/W$, that is, when
\[
\frac1C\sum_{r\ge1}\pi_\alpha(r)^2\,r^{1+\sigma}<\frac1W+\sum_{r\ge1}\pi_\alpha(r)^2
\qquad\Longleftrightarrow\qquad
C>\frac{\sum_{r\ge1}\pi_\alpha(r)^2\,r^{1+\sigma}}{\dfrac1W+\sum_{r\ge1}\pi_\alpha(r)^2}.
\]
Hence $\thor(C)>0$ for every $C$ above this ratio, and $\Cor$ is at most the ratio. Inserting \eqref{eq:Wsib}
gives the expression in \eqref{eq:CU} for each $\alpha$, and we may take the best $\alpha$. The condition
$\alpha>\sigma/2$ makes the numerator finite, by (S2), and $\alpha<\frac12$ makes $W$ finite.

For the closed form \eqref{eq:Ccf} we use (S2) in the numerator,
\[
\sum_{r\ge1}\pi_\alpha(r)^2\,r^{1+\sigma}\le\alpha^2\sum_{r\ge1}r^{-2-2\alpha}\,r^{1+\sigma}
=\alpha^2\sum_{r\ge1}r^{-(1+2\alpha-\sigma)}=\alpha^2\,\zeta(1+2\alpha-\sigma).
\]
For \eqref{eq:simple} we choose $\alpha=\sigma$, which lies in the window when $\sigma<\frac12$, and drop the
nonnegative term $\Gamma(1+2\alpha)/\Gamma(1+\alpha)^2-1$ from the denominator, which only increases the ratio.
What remains is $\sigma^2\zeta(1+\sigma)\,W(\sigma)$. The expansion follows from
$\zeta(1+\sigma)=1/\sigma+\gamma_E+O(\sigma)$ and $\Gamma(1-2\sigma)/\Gamma(1-\sigma)^2=1+O(\sigma^2)$.

Finally, the lower bounds in \eqref{eq:asym} are those of proposition~\ref{prop:lower}, and the upper bounds follow
from \eqref{eq:order} and \eqref{eq:simple}:
$\zeta(1+\sigma)\,\Cor\le[\sigma\zeta(1+\sigma)]^2\,W(\sigma)=1+2\gamma_E\,\sigma+O(\sigma^2)$.
\end{proof}

\section{Blocks: proofs of theorem~\ref{thm:block} and corollary~\ref{cor:less1}}
\label{sec:blocks}

\begin{proof}[Proof of theorem~\ref{thm:block}]
The proof follows the nine steps of section~\ref{sec:proof}, and we list only the changes.

\emph{Paths.} A random monotone path $0=X_0<X_1<\cdots$ is now a sequence of block indices. We call its first $n$
steps $\gamma_n$ open if (i) each of the $n+1$ blocks $B_{X_0},\dots,B_{X_n}$ is good, and (ii) each of these blocks
is linked to the next one. These events concern disjoint sets of bonds: the bonds inside different blocks, and the
bonds between different pairs of blocks. They are therefore independent, and
\[
P(\gamma_n)=g_L^{\,n+1}\prod_{k=1}^{n}p^{(L)}_{X_k-X_{k-1}} .
\]

\emph{Step 1.} $Z_n$ is defined as in \eqref{eq:Zn}, and $\EE Z_n=1$. If $Z_n>0$, some block path of $n$ steps is
open. All the sites of its blocks then belong to one cluster. This cluster contains site $0$, which lies in $B_0$,
and it contains sites at distance at least $nL$.

\emph{Step 2} is unchanged.

\emph{Step 3.} Both paths are open when all blocks of the union are good and all links of the union are present. A
common block and a shared link are counted once. Dividing by $P(\gamma_n)P(\gamma'_n)$ leaves a factor $1/g_L$ for
every common block and a factor $1/p^{(L)}_r$ for every shared link of length $r$. All factors are at least $1$, so
\[
\EE Z_n^2\le V_L:=\EE_{\gamma,\gamma'}\Bigl[\Bigl(\frac{1}{g_L}\Bigr)^{N}\prod_{\text{shared links}}\frac{1}{p^{(L)}_r}\Bigr],
\]
where $N$ is the number of common blocks of the two infinite paths, block $0$ included.

\emph{Step 4} gives $\theta(C)\ge1/V_L$.

\emph{Steps 5 to 7} concern only the two random paths, so they hold with `block' in place of `site'. In particular
$\rho=1-1/W$.

\emph{Step 8.} The mean weight of a segment is now $m_L=\rho+S_L$, with
$S_L=\sum_{r}\pi(r)^2\bigl(1/p^{(L)}_r-1\bigr)$. Pairs of paths with $k$ segments have $k+1$ common blocks, hence
\[
V_L=\sum_{k\ge0}\Bigl(\frac1{g_L}\Bigr)^{k+1}m_L^{\,k}\,(1-\rho)=\frac{1-\rho}{g_L-m_L}\qquad\text{if }m_L<g_L .
\]
With $1-\rho=1/W$ and $g_L-m_L=g_L-1+1/W-S_L$,
\[
\frac1{V_L}=W\Bigl(\frac1W-(1-g_L)-S_L\Bigr)=1-W\bigl[(1-g_L)+S_L\bigr],
\]
which is the right-hand side of \eqref{eq:block}. If $m_L\ge g_L$ this expression is not positive and there is
nothing to prove.
\end{proof}

To use theorem~\ref{thm:block} we need a lower bound on $g_L$ and an upper bound on $1/p^{(L)}_r-1$.

\begin{lemma}\label{lem:ingredients}
Let $D_r=\sum_{x\in B_0,\,y\in B_r}(y-x)^{-1-\sigma}$, so that $C\,D_r$ is the mean number of open bonds between two
blocks at distance $r$. Then:
\begin{itemize}
\item[(B1)] $g_L\ge C^{L-1}$, and also
$g_L\ge\prod_{t=1}^{L-1}\bigl[1-\prod_{j=1}^{t}(1-p_j)\bigr]$.
\item[(B2)] $D_r\ge L^{1-\sigma}\,r^{-1-\sigma}$.
\item[(B3)] $\displaystyle\frac{1}{p^{(L)}_r}-1\le\frac{1}{e^{CD_r}-1}\le\frac{1}{C\,D_r}\le\frac{r^{1+\sigma}}{C\,L^{1-\sigma}}$.
\end{itemize}
\end{lemma}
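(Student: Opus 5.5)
The three parts are independent, and I would prove them in the order (B1), (B2), (B3). Each one rests on an explicit spanning structure, an elementary comparison of sums, or the inequality $1-x\le e^{-x}$.

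\textbf{(B1).} For the first bound, note that a block is good whenever all $L-1$ nearest-neighbour bonds inside it are open. These bonds are independent and each is open with probability $p_1=C$, so $g_L\ge C^{L-1}$. For the second bound I would build a spanning tree of the block one site at a time. Label the sites $0,\dots,L-1$ and, for $t=1,\dots,L-1$, let $E_t$ be the event that site $t$ has at least one open bond to some site $t-j$ with $1\le j\le t$. The event $E_t$ depends only on the $t$ bonds from $t$ to the sites on its left, and different $t$ use disjoint sets of bonds. The events $E_t$ are therefore independent, with
\[
\PP(E_t)=1-\prod_{j=1}^{t}(1-p_j).
\]
On the intersection of all $E_t$, induction on $t$ shows that sites $0,\dots,t$ are connected inside the block, so the block is good. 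The product formula follows. Since the $j=1$ factor alone gives $\PP(E_t)\ge C$, this bound also implies the first one.

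\textbf{(B2).} Take $x\in B_0$ and $y\in B_r$. Then $y-x\le (r+1)L-1<(r+1)L$, which only gives $D_r\ge L^2\,((r+1)L)^{-1-\sigma}$. That has the wrong factor $(r+1)$ in place of $r$, so this crude estimate must be sharpened. I would instead group the $L^2$ pairs by their difference $d=y-x$. The difference $d=rL+s$ with $|s|\le L-1$ occurs $L-|s|$ times, so
\[
D_r=\sum_{s=-(L-1)}^{L-1}(L-|s|)\,(rL+s)^{-1-\sigma}.
\]
Pairing the terms $s$ and $-s$ and using convexity of $f(d)=d^{-1-\sigma}$, we get $f(rL+s)+f(rL-s)\ge2f(rL)$. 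Hence
\[
D_r\ge\Bigl(\sum_{s}(L-|s|)\Bigr)f(rL)=L^2\,(rL)^{-1-\sigma}=L^{1-\sigma}r^{-1-\sigma}.
\]
This convexity pairing is the only step that needs care, and it is the main point to get right.

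\textbf{(B3).} The bond probabilities satisfy $p_{y-x}\le1$, and $1-p\le e^{-p}$. Therefore
\[
1-p^{(L)}_r=\prod(1-p_{y-x})\le e^{-CD_r},\qquad\text{so}\qquad p^{(L)}_r\ge1-e^{-CD_r}.
\]
Since $t\mapsto 1/t-1$ is decreasing, this gives
\[
\frac{1}{p^{(L)}_r}-1\le\frac{1}{1-e^{-CD_r}}-1=\frac{e^{-CD_r}}{1-e^{-CD_r}}=\frac{1}{e^{CD_r}-1}.
\]
Next, $e^{t}-1\ge t$ gives the bound $1/(CD_r)$. Finally, (B2) gives the bound $r^{1+\sigma}/(CL^{1-\sigma})$.
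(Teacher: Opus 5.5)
Your proposal is correct and follows the paper's proof essentially step by step. (B1) uses the same two constructions: the nearest-neighbour bonds, and a bond from each site to some site on its left. (B3) uses the same chain $1-x\le e^{-x}$, then monotonicity of $q/(1-q)$, then $e^a-1\ge a$. In (B2), your pairing of the differences $rL\pm s$ with equal multiplicities $L-|s|$ is just an explicit form of the paper's Jensen step: the function $t\mapsto t^{-1-\sigma}$ is convex, and the average of $y-x$ over the $L^2$ pairs is exactly $rL$.
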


\begin{proof}
\textbf{(B1)} If the $L-1$ nearest-neighbour bonds of a block are open, the block is good, and each of them is open
with probability $p_1=C$. For the second bound, the block is good if every site of the block except the first has
an open bond to at least one of the sites of the block on its left. For the site with $t$ sites on its left this
has probability $1-\prod_{j=1}^{t}(1-p_j)$, and different sites involve different bonds.

\textbf{(B2)} The function $t\mapsto t^{-1-\sigma}$ is convex, so by Jensen's inequality its average over the $L^2$
pairs $(x,y)$ is at least its value at the average distance. The average of $y-x$ is exactly $rL$. Hence
$D_r\ge L^2\,(rL)^{-1-\sigma}=L^{1-\sigma}r^{-1-\sigma}$.

\textbf{(B3)} Using $1-x\le e^{-x}$ in every factor of \eqref{eq:pL}, $1-p^{(L)}_r\le e^{-C D_r}$. Therefore
\[
\frac{1}{p^{(L)}_r}-1=\frac{1-p^{(L)}_r}{p^{(L)}_r}\le\frac{e^{-CD_r}}{1-e^{-CD_r}}=\frac{1}{e^{CD_r}-1}\le\frac{1}{C\,D_r}.
\]
The first inequality holds because $q/(1-q)$ increases with $q$, and the second because $e^{a}-1\ge a$. The last
inequality of (B3) is (B2).
\end{proof}

Comparing (B3) with the cost $1/p_r-1\le r^{1+\sigma}/C$ of a single bond, we see that a link between blocks costs
$L^{1-\sigma}$ times less.

\begin{proof}[Proof of corollary~\ref{cor:less1}]
We take Sibuya steps with $\alpha$ in the window \eqref{eq:window}, and write
$A=\alpha^2\zeta(1+2\alpha-\sigma)$ and $W=W(\alpha)$. We bound the two terms in the bracket of \eqref{eq:block}. By
(B1) and Bernoulli's inequality, $C^{L-1}\ge1-(L-1)(1-C)$,
\[
1-g_L\le1-C^{L-1}\le(L-1)(1-C).
\]
By (B3), and then by (S2) as in the proof of corollary~\ref{cor:bound},
\[
\sum_{r\ge1}\pi_\alpha(r)^2\Bigl(\frac{1}{p^{(L)}_r}-1\Bigr)\le\frac{1}{C\,L^{1-\sigma}}\sum_{r\ge1}\pi_\alpha(r)^2\,r^{1+\sigma}
\le\frac{A}{C\,L^{1-\sigma}} .
\]
Let $L\ge2$ be the smallest integer with $L^{1-\sigma}\ge4WA$. For $C\ge\frac12$ the second term is then at most
$2A/L^{1-\sigma}\le1/(2W)$. The first term is less than $1/(2W)$ when $1-C<1/[2W(L-1)]$. For such $C$, which
are indeed at least $\frac12$ because $W\ge1$ and $L\ge2$, the bracket is less than $1/W$, and
theorem~\ref{thm:block} gives $\theta(C)>0$.
\end{proof}

With the best $\alpha$, corollary~\ref{cor:less1} gives $1-C_c\ge4\times10^{-5}$ at $\sigma=0.7$,
$5\times10^{-9}$ at $\sigma=0.8$ and $7\times10^{-23}$ at $\sigma=0.9$.

\section{Numerical evaluation}
\label{sec:numerics}

\subsection{Evaluation of the bounds}

The upper bounds in figure~\ref{fig:bounds} and table~\ref{tab:numbers} are numerical evaluations of
\eqref{eq:CU} and \eqref{eq:block}. No simulation is involved in them.

The sums over $r$ are evaluated up to $r=R=2\times10^5$. The rest is bounded from above by using
$\pi_\alpha(r)\le\pi_\alpha(R)\,(R/r)^{1+\alpha}$ for $r\ge R$, which follows from (S1) in the same way as (S2), so
that
\[
\sum_{r>R}\pi_\alpha(r)^2\,r^{1+\sigma}\le\frac{\pi_\alpha(R)^2\,R^{2+\sigma}}{2\alpha-\sigma}.
\]
Every value is therefore an upper bound on the threshold, up to rounding. The minimum over $\alpha$ in
\eqref{eq:CU} is found numerically. We find $C^U(\sigma)<1$ for $\sigma<0.667$. More precisely, $C^U$ reaches
$1$ at $\sigma=0.6672$. This number is the solution of a transcendental equation, and it is not $2/3$: at
$\sigma=2/3$ we find $C^U=0.998$.

For theorem~\ref{thm:block} we use Sibuya steps, the second bound of (B1), the exact $D_r$ for $r\le200$ and the
bound (B2) for larger $r$, and the bound $1/(e^{CD_r}-1)$ of (B3). The value in table~\ref{tab:numbers} is the
smallest $C$ at which the right-hand side of \eqref{eq:block} is positive, minimized over $\alpha$ and over a list
of block sizes $2\le L\le5000$. Blocks improve on $C^U$ for $\sigma>0.645$. For $\sigma\ge0.84$ larger blocks than
those we used are required, and the bound is closer to $1$ than $10^{-4}$.

\subsection{Numerical checks}

As an independent check, the formulas of this paper were tested numerically in three ways.
\begin{itemize}
\item[(i)] The closed forms \eqref{eq:Wsib} agree with a direct summation of the series to six digits, for
$\alpha=0.2$, $0.3$ and $0.4$.
\item[(ii)] The renewal formula \eqref{eq:Vfinal} holds for any weights in place of $1/p_r$. We tested it by a
Monte Carlo sampling of pairs of independent Sibuya paths with $\alpha=0.35$, with the weight $1.5$ for a shared
bond of length $r\le10$ and the weight $1$ otherwise. The formula gives $V=1.1260$, and $3\times10^5$ pairs of
paths, followed up to site $2\times10^6$, gave $1.1247\pm0.0006$. The cut-off of the paths lowers the Monte Carlo
value by about $5\times10^{-4}$.
\item[(iii)] In simulations of the model on a ring of $4\times10^5$ sites, the fraction of sites in the largest
cluster is well above the lower bound \eqref{eq:main} on $\theta$, as it should be. For example, at $\sigma=0.5$ and
$C=0.6$ the simulated fraction is $0.97$ and the bound is $\theta\ge0.10$, and at $\sigma=0.3$ and $C=0.3$ the
fraction is $0.87$ and the bound is $\theta\ge0.07$.
\end{itemize}

\section{Discussion}
\label{sec:discussion}

We have proved explicit upper bounds on the threshold of one-dimensional LR percolation. The first, $C^U(\sigma)$,
is given by a one-parameter minimization of a ratio of a convergent series and Gamma functions, and has the closed
forms \eqref{eq:Ccf} and \eqref{eq:simple}. The second is obtained from blocks of sites and is below $1$ for every
$\sigma<1$.

\textbf{The oriented model.} The paths that we use are monotone, and therefore theorem~\ref{thm:main} is in fact a
statement on oriented percolation. For this model the picture is rather complete for small $\sigma$: the threshold
is $\Cor=\sigma+O(\sigma^2)$, and the two bounds are much closer than \eqref{eq:asym} suggests. Numerically, the
ratio of the bounds, $\zeta(1+\sigma)\,C^U(\sigma)$, exceeds $1$ by about $5\,\sigma^3$ for $\sigma\le0.1$. Most of
this excess comes from the denominator of \eqref{eq:CU}. By \eqref{eq:cases} and \eqref{eq:rho}, one minus this
denominator is the probability of case (B) of step~6, in which two paths meet again without sharing a bond, and
expanding the Gamma functions gives
\[
1-\Bigl[\frac{\Gamma(1-\alpha)^2}{\Gamma(1-2\alpha)}+\frac{\Gamma(1+2\alpha)}{\Gamma(1+\alpha)^2}-1\Bigr]
=4\zeta(3)\,\alpha^3+O(\alpha^4),\qquad 4\zeta(3)=4.808\ldots
\]
The paths are so sparse that such meetings are a third-order effect. We have not tried to prove the corresponding
expansion of $C^U$.

It is also instructive to see directly why theorem~\ref{thm:main} cannot give more than
proposition~\ref{prop:lower} allows. By the Cauchy--Schwarz inequality,
\[
1=\Bigl(\sum_{r\ge1}\frac{\pi(r)}{\sqrt{p_r}}\,\sqrt{p_r}\Bigr)^2
\le\sum_{r\ge1}\frac{\pi(r)^2}{p_r}\;\sum_{r\ge1}p_r ,\qquad\text{so}\qquad
\sum_{r\ge1}\frac{\pi(r)^2}{p_r}\ge\frac{1}{C\,\zeta(1+\sigma)}.
\]
On the other hand, the right-hand side of \eqref{eq:main} is positive only if
$\sum_r\pi(r)^2/p_r<1/W+\sum_r\pi(r)^2$, and this is at most $1$, because $\sum_r\pi(r)^2\le\rho=1-1/W$ by
\eqref{eq:cases}. Hence theorem~\ref{thm:main} can prove percolation only for $C>1/\zeta(1+\sigma)$, whatever the
step distribution is. Equality in the Cauchy--Schwarz inequality requires $\pi(r)\propto p_r$, and the Sibuya steps
with $\alpha$ close to $\sigma$ are close to this choice.

\textbf{A factor of two.} For the unoriented model, the upper bound is twice the lower bound \eqref{eq:schulman}
as $\sigma\to0$, and the numerical values~\cite{gori2017} indicate that it is the lower bound which is sharp in
this limit. The factor of two is the price of monotone paths, which use the bonds to one side only. Closing the gap
would require paths with steps in both directions. The sites visited by two such paths no longer form a renewal
process, and we leave the proof that $2\zeta(1+\sigma)\,C_c(\sigma)\to1$ as an open problem. We note that the known
results on the spread-out limit of percolation~\cite{penrose1993,bollobas2007}, in which the mean degree at the
threshold tends to one, concern bonds of a finite range in $d\ge2$ dimensions and do not cover this case.

\textbf{Larger $\sigma$.} As $\sigma$ grows the bounds move away from the numerical thresholds, and for
$\sigma>0.65$ they are close to the trivial bound $C_c\le1$. This is a limitation of the method: the window
\eqref{eq:window} closes as $\sigma\to1$. The transition at $\sigma=1$ is of a different nature~\cite{aizenman1986},
and we do not expect path-counting arguments to be sharp near it.

\textbf{The percolation probability.} Theorems~\ref{thm:main} and~\ref{thm:block} bound the percolation probability
itself, not only the threshold. For example, at $\sigma=0.5$ and $C=0.8$, theorem~\ref{thm:main} with Sibuya steps
gives $\theta\ge\thor\ge0.39$.

\textbf{Interdependent networks.} In a multiplex of two LR networks the threshold is higher than
$C_c$~\cite{sharabi2026}, and only lower bounds on it are known. It would be interesting to extend the present
argument to that case, where a path must be open in both layers.

\section*{Acknowledgments}

During the preparation of this work the author used the AI tool Claude Opus~5.5 (Anthropic) to derive the proofs
and to write them step by step, to write the codes that evaluate the bounds, to prepare the figures, to search the
literature, and to draft the manuscript. The author reviewed, validated and critically assessed all AI-assisted
outputs and takes full responsibility for the content of the manuscript.

\section*{Data availability statement}

No new experimental data were created in this study. The codes that evaluate the bounds and produce
figure~\ref{fig:bounds} are openly available at \url{https://github.com/guy531/long_range_upper_bound}.

\end{document}